\newtheorem{theorem}{Theorem}[section]
\newtheorem{proposition}[theorem]{Proposition}
\newtheorem{corollary}[theorem]{Corollary}
\theoremstyle{definition}
\newtheorem{example}[theorem]{Example}
\theoremstyle{remark}
\newtheorem{remark}[theorem]{Remark}
\numberwithin{equation}{section}
\newcommand{\be}{\begin{equation}}
\newcommand{\ee}{\end{equation}}
\newcommand{\cM}{{\mathcal M}}
\newcommand{\CC}{\mathbb{C}} 
\newcommand{\NN}{\mathbb{N}}
\newcommand{\RR}{\mathbb{R}}
\newcommand{\al}{\alpha}
\newcommand{\AAA}{\mathcal{A}}
\begin{document}
\setcounter{page}{1}

\title[Inequalities on the spectral radius of Hadamard products]{Inequalities on the spectral radius and the operator norm of Hadamard products of positive operators on sequence spaces}

\author[R. Drnov\v{s}ek and A. Peperko]{Roman Drnov\v{s}ek$^1$ and Aljo\v{s}a Peperko$^2$}

\address{$^{1}$ Faculty of Mathematics and Physics, University of Ljubljana, Jadranska 19, SI-1000 Ljubljana, Slovenia.}
\email{\textcolor[rgb]{0.00,0.00,0.84}{roman.drnovsek@fmf.uni-lj.si}}

\address{$^{2}$  Faculty of Mechanical Engineering, University of Ljubljana, A\v{s}ker\v{c}eva 6, SI-1000 Ljubljana, Slovenia;
\newline
Institute of Mathematics, Physics, and Mechanics,
Jadranska 19, SI-1000 Ljubljana, Slovenia.}
\email{\textcolor[rgb]{0.00,0.00,0.84}{aljosa.peperko@fmf.uni-lj.si; aljosa.peperko@fs.uni-lj.si}}

\subjclass[2010]{Primary 47B65; Secondary 15A42, 15A60.}

\keywords{Hadamard-Schur product; Spectral radius; Non-negative matrices; Positive operators; Sequence spaces.}

\begin{abstract}
Relatively recently, K.M.R. Audenaert (2010), R.A. Horn and F. Zhang (2010), Z. Huang (2011), A.R. Schep (2011), A. Peperko (2012), D. Chen and Y. Zhang (2015) have proved inequalities on the spectral radius and the operator norm
 of Hadamard products and ordinary matrix products 
of finite and infinite non-negative matrices that define operators on sequence spaces. 
In the current paper we extend and refine several of these results and also prove some analogues for the numerical radius.
Some inequalities seem to be new even in the case of $n\times n$ non-negative matrices.
\end{abstract} \maketitle

\section{Introduction}

\baselineskip 6mm

In \cite{Zh09}, X. Zhan conjectured that, for non-negative $n\times n$ matrices $A$ and $B$, the spectral radius $\rho (A\circ B)$ of the Hadamard product satisfies
$$\rho (A\circ B) \le \rho (AB),$$
where $AB$ denotes teh usual matrix product of $A$ and $B$. This conjecture was confirmed by K.M.R. Audenaert in \cite{Au10} by proving 
\be
\rho (A\circ B) \le \rho ^{\frac{1}{2}}((A\circ A)(B\circ B))\le \rho (AB).
\label{Aud}
\ee
These inequalities were established via a trace description of the spectral radius. Using the fact that the Hadamard product is a principal submatrix of 
the Kronecker product, R.A. Horn and F. Zhang  proved in \cite{HZ10} the inequalities
\be
\rho (A\circ B) \le \rho ^{\frac{1}{2}}(AB\circ BA)\le \rho (AB)
\label{HZ}
\ee
and also the right-hand side inequality in (\ref{Aud}). Applying the techniques of \cite{HZ10}, Z. Huang proved that 
\be
\rho (A_1 \circ A_2 \circ \cdots \circ A_m) \le \rho (A_1 A_2 \cdots A_m)
\label{Hu}
\ee
for $n\times n$ non-negative matrices $A_1, A_2, \cdots, A_m$ (see \cite{Hu11}). A related inequality 
for $n\times n$ non-negative matrices was shown in \cite{EJS88}: 
\be
\rho (A_1 \circ A_2 \circ \cdots \circ A_m) \le \rho (A_1) \rho (A_2) \cdots \rho (A_m ) .
\label{EJS}
\ee

In \cite{S11} and \cite{Sc11}, A.R. Schep extended inequalities (\ref{Aud}) and (\ref{HZ}) to non-negative matrices that define bounded 
operators on sequence spaces (in particular on $l^p$ spaces, $1\le p <\infty$). In the proofs certain results on the Hadamard product from \cite{DP05} were used.  
It was claimed in \cite[Theorem 2.7]{S11} that
\be
\rho (A\circ B) \le \rho ^{\frac{1}{2}}((A\circ A)(B\circ B))\le  \rho ^{\frac{1}{2}}(AB\circ BA)\le \rho (AB).
\label{Sfirst}
\ee
However, the proof of \cite[Theorem 2.7]{S11} actually demonstrates that
\be
\rho (A\circ B) \le \rho ^{\frac{1}{2}}((A\circ A)(B\circ B))\le  \rho ^{\frac{1}{2}}(AB\circ AB)\le \rho (AB).
\label{Sproved}
\ee
It turned out that $\rho (AB\circ BA)$ and $\rho (AB\circ AB)$ may in fact be different and that $(\ref{Sfirst})$ is false in general. This typing error was corrected in
\cite{Sc11} and \cite{P12}. Moreover, it was proved in \cite{P12} that  for non-negative matrices that define bounded 
operators on sequence spaces 
the inequalities
\be
\rho (A\circ B) \le \rho ^{\frac{1}{2}}((A\circ A)(B\circ B))\le \rho(AB \circ AB)^{\frac{1}{4}} \rho(BA \circ BA)^{\frac{1}{4}} \le \rho (AB)
\label{mix1}
\ee
and (\ref{Hu}) hold.
 
 In \cite{CZ15}, by applying the techniques of \cite{Au10} the inequality (\ref{Hu}) in the case of $n\times n$ non-negative matrices was interpolated in the sense 
$$\rho (A_1 \circ A_2 \circ \cdots \circ A_m) \le [ \rho (A_1 \circ A_2 \circ \cdots \circ A_m)]^{1- \frac{2}{m}}[\rho ((A_1 \circ A_1)(A_2 \circ A_2) \cdots (A_m \circ A_m))] ^{\frac{1}{m}} $$
\begin{equation}
\le  \rho (A_1 A_2 \cdots A_m)
\label{CZ}
\end{equation}
for $m\ge 2$.

The paper is organized as follows. In the second section we introduce some definitions and facts and recall some results from \cite{DP05} and \cite{P06}, which we will need in our proofs. In the third section we extend and/or refine several inequalities from \cite{Hu11}, \cite{P12}, \cite{CZ15},  \cite{DP05} and \cite{P06} (including the inequalities (\ref{Hu}) and (\ref{CZ})) to   non-negative matrices that define bounded 
operators on sequence spaces. 
 More precisely, in Theorem \ref{Bfs} we prove a version of inequality (\ref{Hu}), which is valid for arbitrary positive kernel operators on Banach function spaces. In Theorem \ref{genP1} we refine inequality (\ref{Hu}) and prove  analogues for the operator norm and the numerical radius. Consequently, Corollary \ref{fromCZ} generalizes and refines (\ref{CZ}).  In Theorem \ref{decreasing} we refine the inequality (\ref{EJS}) and prove analogue results for  the operator norm and the numerical radius. We generalize and refine some additional results from  \cite{Hu11} and \cite{CZ15} in Theorems \ref{th4Hu} and \ref{th5H}.
We conclude the paper by applying the spectral mapping theorem to obtain additional results (Theorem \ref{powerineq}, Corollaries \ref{exp} and \ref{resolv}).
Several inequalities in the paper appear to be  new even in the case of $n\times n$ non-negative matrices.

\section{Preliminaries}
\vspace{1mm}

Let $R$ denote either the set $\{1, \ldots, n\}$ for some $n \in \NN$ or the set $\NN$ of all natural numbers.
Let $S(R)$ be the vector lattice of all complex sequences $(x_i)_{i\in R}$.
A Banach space $L \subseteq S(R)$ is called a {\it Banach sequence space} if $x \in S(R)$, $y \in L$
and $|x| \le |y|$ imply that $x \in L$ and $\|x\|_L \le \|y\|_L$. 
The cone of all non-negative elements in $L$ is denoted by $L_{+}$.

Let us denote by $\mathcal{L}$ the collection of all Banach sequence spaces
$L$ satisfying the property that $e_i = \chi_{\{i\}} \in L$ and
$\|e_i\|_L=1$ for all $i \in R$. Standard examples of spaces from $\mathcal{L}$ are Euclidean spaces, the 
well-known spaces $l^p (R)$ ($1\le p \le \infty$) and the space $c_0$ 
of all null convergent sequences, equipped with the usual norms. The set $\mathcal{L}$ also contains all cartesian products $L=X\times Y$ for 
$X, Y\in \mathcal{L}$, equipped with the norm
$\|(x, y)\|_L=\max \{\|x\|_X, \|y\|_Y\}$.

A matrix $A=[a_{ij}]_{i,j\in R}$ is called {\it non-negative} if $a_{ij}\ge 0$ for all $i, j \in R$. 
Given matrices $A$ and $B$, we write $A \le B$ if the matrix $B - A$ is non-negative. Note that the matrices here need not be finite dimensional. 

By an {\it operator} on a Banach sequence space $L$ we always
mean a linear operator on $L$. We say that a non-negative matrix $A$ defines an operator on $L$ if $Ax \in L$ for all $x\in L$, where
$(Ax)_i = \sum _{j \in R}a_{ij}x_j$. Then $Ax \in L_+$ for all $x\in L_+$ and
so $A$ defines a {\it positive} operator on $L$. Recall that this operator is always bounded, i.e., its operator norm
\be
\|A\|=\sup\{\|Ax\|_L : x\in L, \|x\|_L \le 1\}=\sup\{\|Ax\|_L : x\in L_+, \|x\|_L \le 1\}
\label{equiv_op}
\ee
is finite.  
Also, its spectral radius $\rho (A)$ is always contained in the spectrum.
We will frequently use the equality $\rho (S T) = \rho (T S)$ that holds for all bounded operators $S$ and $T$ on a Banach space.

If $A=[a_{ij}]$ is a non-negative matrix that define an operator on $l^2(R)$, then 
the matrix $A^T=[a_{ji}]$ defines its adjoint operator on a Hilbert space $l^2(R)$, so that we have 
\be
\|A\|^2 =\|AA^T\|=\|A^TA\|=\rho (AA^T)=\rho (A^TA).
\label{eqT}
\ee

Given non-negative matrices $A=[a_{ij}]_{i,j\in R}$ and $B=[b_{ij}]_{i,j\in R}$,
let $A \circ B=[a_{ij}b_{ij}]_{i,j\in R}$ be the \textit{Hadamard
(or Schur) product} of $A$ and $B$
and let $A^{(t)}=[a_{ij} ^t]_{i,j\in R}$ be the \textit{Hadamard (or
Schur) power} of $A$ for $t\ge0$. Here we use the convention $0^0=1$. 

The following result was proved in \cite[Theorem 3.3]{DP05} and \cite[Theorem 5.1 and Remark 5.2]{P06} by
using only basic analytic methods and elementary facts.

\begin{theorem} Given $L \in \mathcal{L}$, let $\{A_{i j}\}_{i=1, j=1}^{k, m}$ be non-negative matrices that define 
operators on $L$.
If $\alpha _1$, $\alpha _2$,..., $\alpha _m$ are positive numbers 
such that $\sum_{j=1}^m \alpha _j \ge 1$, then the matrix 
$A:= \left(A_{1 1}^{(\alpha _1)} \circ \cdots \circ A_{1 m}^{(\alpha _m)}\right) \cdots 
\left(A_{k 1}^{(\alpha _1)} \circ \cdots \circ A_{k m}^{(\alpha _m)}\right)$
also defines an operator on $L$ 
and it satisfies the inequalities  
\begin{eqnarray}
\label{basic2}
A &\le &  
(A_{1 1} \cdots  A_{k 1})^{(\alpha _1)} \circ \cdots 
\circ (A_{1 m} \cdots A_{k m})^{(\alpha _m)} , \\
\label{norm2}
\left\|A \right\| &\le &  
\|A_{1 1} \cdots  A_{k 1}\|^{\alpha _1} \cdots \|A_{1 m} \cdots A_{k m}\|^{\alpha _m}, \\ 
\label{spectral2}
\rho \left(A \right) &\le  &
\rho \left( A_{1 1} \cdots  A_{k 1} \right)^{\alpha _1} \cdots 
\rho \left( A_{1 m} \cdots A_{k m}\right)^{\alpha _m} .
\end{eqnarray}
\label{DP}
\end{theorem}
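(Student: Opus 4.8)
The plan is to establish the three conclusions in turn: first the entrywise estimate (\ref{basic2}), then the norm bound (\ref{norm2}) (which simultaneously yields that $A$ defines an operator on $L$), and finally the spectral bound (\ref{spectral2}), the last being deduced from (\ref{norm2}) via Gelfand's formula. The inequality (\ref{basic2}) is purely entrywise. Fixing indices $i_0,i_k\in R$ and expanding the matrix product defining $A$, its $(i_0,i_k)$-entry equals $\sum_{\vec\imath}\prod_{j=1}^m c_j(\vec\imath)^{\alpha_j}$, where $\vec\imath=(i_1,\dots,i_{k-1})$ runs over all intermediate multi-indices and $c_j(\vec\imath):=\prod_{\ell=1}^k (A_{\ell j})_{i_{\ell-1}i_\ell}$, while the $(i_0,i_k)$-entry of the right-hand side of (\ref{basic2}) equals $\prod_{j=1}^m\bigl(\sum_{\vec\imath}c_j(\vec\imath)\bigr)^{\alpha_j}$ (all these sums are finite because each $A_{1j}\cdots A_{kj}$, being a composition of operators on $L$, is again represented by a non-negative matrix). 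Thus (\ref{basic2}) reduces to the scalar inequality $\sum_{\vec\imath}\prod_j c_j^{\alpha_j}\le\prod_j(\sum_{\vec\imath}c_j)^{\alpha_j}$; writing $s:=\sum_{j=1}^m\alpha_j\ge1$ and $\beta_j:=\alpha_j/s$, H\"older's inequality for (possibly infinite) sums gives $\sum_{\vec\imath}\prod_j c_j^{\alpha_j}=\sum_{\vec\imath}\prod_j(c_j^{s})^{\beta_j}\le\prod_j\bigl(\sum_{\vec\imath}c_j^{s}\bigr)^{\beta_j}$, and $s\ge1$ yields $\sum_{\vec\imath}c_j^{s}\le\bigl(\sum_{\vec\imath}c_j\bigr)^{s}$, which combine to prove (\ref{basic2}).

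For (\ref{norm2}) I would first record two elementary facts valid for every $L\in\mathcal L$: (i) $|y_p|\le\|y\|_L$ for every $y\in L$ and $p\in R$ (since $|y_p e_p|\le|y|$ and $\|e_p\|_L=1$); and (ii) if $u^{(1)},\dots,u^{(m)}\in L_+$ and $\sum_j\alpha_j=1$, then the sequence $z$ with $z_p:=\prod_{j=1}^m(u^{(j)}_p)^{\alpha_j}$ belongs to $L$, with $\|z\|_L\le\prod_j\|u^{(j)}\|_L^{\alpha_j}$; fact (ii) follows from the parametrized Young inequality $\prod_j t_j^{\alpha_j}\le\bigl(\prod_j\lambda_j^{-\alpha_j}\bigr)\sum_j\alpha_j\lambda_j t_j$ (valid for $t_j,\lambda_j\ge0$), which shows $z\le\bigl(\prod_j\lambda_j^{-\alpha_j}\bigr)\sum_j\alpha_j\lambda_j u^{(j)}\in L$, the choice $\lambda_j=\|u^{(j)}\|_L^{-1}$ then optimizing the bound. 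Now fix $x\in L_+$ and write, summing over the full path $\vec\imath=(i_1,\dots,i_k)$, $(Ax)_{i_0}=\sum_{\vec\imath}\prod_j c_j(\vec\imath)^{\alpha_j}\,x_{i_k}$. Split $\alpha_j=\beta_j+\gamma_j$ with $\beta_j=\alpha_j/s$ (so $\sum_j\beta_j=1$) and $\gamma_j=\alpha_j-\alpha_j/s\ge0$. By fact (i) applied to the sequence $(A_{1j}\cdots A_{kj})e_{i_k}$ one has $c_j(\vec\imath)\le(A_{1j}\cdots A_{kj})_{i_0i_k}\le\|A_{1j}\cdots A_{kj}\|=:M_j$, hence $\prod_j c_j(\vec\imath)^{\gamma_j}\le\prod_j M_j^{\gamma_j}$; writing $x_{i_k}=\prod_j x_{i_k}^{\beta_j}$ and applying H\"older's inequality in the variable $\vec\imath$ gives
$$(Ax)_{i_0}\le\Bigl(\prod_j M_j^{\gamma_j}\Bigr)\sum_{\vec\imath}\prod_j\bigl(c_j(\vec\imath)\,x_{i_k}\bigr)^{\beta_j}\le\Bigl(\prod_j M_j^{\gamma_j}\Bigr)\prod_j\bigl((A_{1j}\cdots A_{kj})x\bigr)_{i_0}^{\beta_j}.$$
Applying fact (ii) with $u^{(j)}=(A_{1j}\cdots A_{kj})x$ shows $Ax\in L$ and $\|Ax\|_L\le\bigl(\prod_j M_j^{\gamma_j}\bigr)\prod_j(M_j\|x\|_L)^{\beta_j}=\bigl(\prod_j M_j^{\alpha_j}\bigr)\|x\|_L$; since $|Ax|\le A|x|$ for arbitrary $x\in L$, this shows $A$ defines an operator on $L$ and proves (\ref{norm2}).

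Finally, (\ref{spectral2}) follows from (\ref{norm2}) together with the spectral radius formula $\rho(T)=\lim_{n\to\infty}\|T^n\|^{1/n}$ and $\rho(T^n)=\rho(T)^n$: for each $n\in\NN$ the power $A^n$ is again a product of $kn$ Hadamard products of precisely the form covered by the theorem, whose ``$j$-th column factor'' is $(A_{1j}A_{2j}\cdots A_{kj})^n$, so (\ref{norm2}) applied to $A^n$ gives $\|A^n\|\le\prod_j\|(A_{1j}\cdots A_{kj})^n\|^{\alpha_j}$, and taking $n$-th roots and letting $n\to\infty$ yields (\ref{spectral2}). The step I expect to require the most care is the case $\sum_j\alpha_j>1$ of (\ref{norm2}): one must split the exponents, absorb the surplus powers into the operator norms $M_j$ of the path products via fact (i), and then recombine with the $\sum_j\beta_j=1$ H\"older estimate so that the resulting constant is exactly $\prod_j\|A_{1j}\cdots A_{kj}\|^{\alpha_j}$; the short auxiliary estimate (ii) on weighted geometric means in $L$ is also, conceptually, the heart of both the norm and the spectral-radius bounds.
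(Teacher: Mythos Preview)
The paper does not supply its own proof of this theorem; it is quoted from \cite[Theorem~3.3]{DP05} and \cite[Theorem~5.1 and Remark~5.2]{P06}, with the sole comment that the proof there uses ``only basic analytic methods and elementary facts.'' Your argument is correct and self-contained, and its ingredients---entrywise H\"older for (\ref{basic2}), the exponent splitting $\alpha_j=\beta_j+\gamma_j$ combined with the weighted geometric-mean estimate in $L$ for (\ref{norm2}), and Gelfand's formula applied to $A^n$ for (\ref{spectral2})---are precisely the ``basic analytic methods'' one expects, and in fact match the structure of the proofs in \cite{DP05,P06}. Two minor points of hygiene: in fact~(ii) the optimizing choice $\lambda_j=\|u^{(j)}\|_L^{-1}$ requires $u^{(j)}\neq 0$, and similarly the bound $c_j(\vec\imath)\le M_j$ is used with $M_j>0$; the degenerate cases where some $\|A_{1j}\cdots A_{kj}\|=0$ should be disposed of separately (they are trivial, since then both sides of (\ref{norm2}) and (\ref{spectral2}) vanish).
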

The following special case of Theorem \ref{DP} $(k=1)$ was considered in the finite dimensional case by several authors using different methods (for references see e.g. \cite{EJS88}, \cite{DP10}, \cite{DP05}, \cite{P06}). 
\begin{corollary}
Given $L \in \mathcal{L}$, let $A_1, \ldots , A_m$ be
non-negative matrices that define operators on $L$ and
$\alpha _1$, $\alpha _2$,..., $\alpha _m$ positive numbers such that
$\sum_{i=1}^m \alpha _i \ge 1$. Then we have
\be
 \|A_1 ^{( \alpha _1)} \circ A_2 ^{(\alpha _2)} \circ \cdots \circ A_m ^{(\alpha _m)} \| \le
  \|A_1\|^{ \alpha _1}  \|A_2\|^{\alpha _2} \cdots \|A_m\|^{\alpha _m}  
\label{gl1nrm}
\ee
and
\be
 \rho(A_1 ^{( \alpha _1)} \circ A_2 ^{(\alpha _2)} \circ \cdots \circ A_m ^{(\alpha _m)} ) \le
\rho(A_1)^{ \alpha _1} \, \rho(A_2)^{\alpha _2} \cdots \rho(A_m)^{\alpha _m} .
\label{gl1vecr}
\ee
\label{dp}
\end{corollary}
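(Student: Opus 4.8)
The plan is to obtain this corollary as the immediate specialization of Theorem \ref{DP} to the case $k=1$. In Theorem \ref{DP} put $k=1$ and $A_{1j}=A_j$ for $j=1,\dots,m$; then each product $A_{1j}\cdots A_{kj}$ collapses to the single matrix $A_j$, the matrix $A$ there becomes $A_1^{(\alpha_1)}\circ\cdots\circ A_m^{(\alpha_m)}$, and the hypothesis $\sum_{j=1}^m\alpha_j\ge1$ is exactly the one assumed here. Theorem \ref{DP} then yields at once that $A_1^{(\alpha_1)}\circ\cdots\circ A_m^{(\alpha_m)}$ defines an operator on $L$, and that the inequalities (\ref{norm2}) and (\ref{spectral2}) specialize to (\ref{gl1nrm}) and (\ref{gl1vecr}), respectively. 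Thus the proof is a one-line reduction, and the cleanest thing to record is precisely this specialization.

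For completeness I would indicate how a self-contained argument runs. The norm inequality (\ref{gl1nrm}) is obtained by working entrywise: for $x\in L_+$ one writes $\bigl((A_1^{(\alpha_1)}\circ\cdots\circ A_m^{(\alpha_m)})x\bigr)_i=\sum_{k}\prod_{j=1}^m (a^{(j)}_{ik})^{\alpha_j}x_k$ and applies H\"older's inequality together with the monotonicity of $\|\cdot\|_L$ and $\|e_i\|_L=1$. The case $\sum_j\alpha_j=1$ comes out directly upon distributing $x_k=\prod_j x_k^{\alpha_j}$; the case $\sum_j\alpha_j=\sigma>1$ then follows from it, since writing $C'=A_1^{(\alpha_1/\sigma)}\circ\cdots\circ A_m^{(\alpha_m/\sigma)}$ one has $A_1^{(\alpha_1)}\circ\cdots\circ A_m^{(\alpha_m)}=(C')^{(\sigma)}$, and $\|B^{(\sigma)}\|\le\|B\|^{\sigma}$ whenever $\sigma\ge1$ (because every entry of a non-negative matrix defining an operator on such an $L$ is dominated by its operator norm, so $B^{(\sigma)}\le\|B\|^{\sigma-1}B$ entrywise).

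The spectral radius inequality (\ref{gl1vecr}) is the one delicate point: it does not follow from an entrywise estimate in the same way, and in \cite{DP05} and \cite{P06} it is deduced from the norm inequality by a scaling and limiting argument. Since all of this is already packaged in Theorem \ref{DP}, however, there is no genuine obstacle here; the substantive work lies in that theorem, and I would simply invoke it with $k=1$.
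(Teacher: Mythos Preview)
Your proposal is correct and matches the paper's own treatment: the paper presents this corollary precisely as the special case $k=1$ of Theorem~\ref{DP} and gives no separate proof. Your additional self-contained sketch is fine but unnecessary here, since the paper simply invokes Theorem~\ref{DP}.
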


The following special case of Theorem \ref{DP} was also proved in \cite[Proposition 3.1]{DP05} and \cite[Lemma 4.2]{P06}.

\begin{proposition} 
Given $L$ in $\mathcal{L}$, let $A_1, \ldots , A_m$ be non-negative matrices that define operators on $L$.
Then, for any $t \ge 1$ and $i=1, \ldots , m$, $A_i ^{(t)}$ also defines an operator on $L$, and the following
inequalities hold
\be
A_{1} ^{(t)} \cdots  A_{m} ^{(t)} \le ( A_1 \cdots A_m )^{(t)},
\label{gl}
\ee
\be
\| A_{1} ^{(t)} \cdots  A_{m} ^{(t)}\| \le \| A_1 \cdots A_m \|^{t},
\label{gl1}
\ee
\be
\rho \! \left(A_{1} ^{(t)} \cdots  A_{m} ^{(t)}\right) \le \rho (A_1 \cdots A_m)^{t}. 
\label{gl3}
\ee 
\label{2}
\end{proposition}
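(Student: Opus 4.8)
The plan is to obtain Proposition \ref{2} as a direct specialization of Theorem \ref{DP}; no genuinely new argument is needed. First, to see that $A_i^{(t)}$ defines an operator on $L$ for $t \ge 1$, I would apply Theorem \ref{DP} with $k = 1$ and a single Hadamard factor: let the inner index (the "$j$" of Theorem \ref{DP}) run only over $\{1\}$, set $\alpha_1 = t$ (so the hypothesis $\sum_j \alpha_j = t \ge 1$ holds), and take $A_{11} = A_i$. Then the matrix $A$ of Theorem \ref{DP} is $A_i^{(t)}$ itself, which the theorem asserts defines an operator on $L$.

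Next, for the three inequalities, I would invoke Theorem \ref{DP} again, now with $k = m$ and still a single Hadamard factor: let the inner index range over $\{1\}$, put $\alpha_1 = t \ge 1$, and $A_{i1} = A_i$ for $i = 1, \ldots, m$. With these choices the product $A$ of Theorem \ref{DP} becomes exactly $A_1^{(t)} \cdots A_m^{(t)}$, while $A_{11} \cdots A_{m1} = A_1 \cdots A_m$; substituting into \eqref{basic2}, \eqref{norm2} and \eqref{spectral2} yields precisely \eqref{gl}, \eqref{gl1} and \eqref{gl3} in turn.

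Since Theorem \ref{DP} is available, the only point requiring care is the bookkeeping in the specialization: the number $m$ of matrices in Proposition \ref{2} plays the role of the "$k$" in Theorem \ref{DP}, and the number of Hadamard factors there must be collapsed to $1$. For the reader's intuition I would also record the elementary fact underlying everything: entrywise, \eqref{gl} is the inequality $\sum_\alpha c_\alpha^t \le \big( \sum_\alpha c_\alpha \big)^t$ for non-negative reals $c_\alpha$ and $t \ge 1$ (applied with $\alpha = (k_1, \ldots, k_{m-1})$ a multi-index and $c_\alpha$ the corresponding product of matrix entries), and \eqref{gl1}, \eqref{gl3} then follow from \eqref{gl} by the monotonicity of the operator norm and of the spectral radius of positive operators on a Banach sequence space under the order $0 \le C \le D$. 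This gives a self-contained route, but the specialization of Theorem \ref{DP} is the economical one, so I expect that to be the proof of record.
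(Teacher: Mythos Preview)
Your proposal is correct and matches the paper's own treatment: the paper does not give a separate proof of Proposition~\ref{2} but simply presents it as the special case of Theorem~\ref{DP} obtained by collapsing the Hadamard index to a single factor with exponent $\alpha_1 = t \ge 1$ and letting the product index run over $1,\ldots,m$. Your bookkeeping (the paper's $k$ is the proposition's $m$, the paper's $m$ is set to $1$) is exactly the intended specialization, and the supplementary remark on the underlying scalar inequality is accurate though not needed for the argument of record.
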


Note that Theorem \ref{DP} and its special cases proved to be quite useful in different contexts (see e.g. \cite{EHP90}, \cite{EJS88},  \cite{DP05}, \cite{P06},  \cite{DP10}, \cite{S11}, \cite{P12}, \cite{CZ15}).
It will also be one of the main tools in the current paper. 

Banach sequence spaces are special cases of Banach function spaces. As proved in \cite{DP05} and \cite{P06},  the inequalities in Theorem \ref{DP} and Corollary \ref{dp} can be extended to positive kernel operators on Banach function spaces provided $\sum_{i=1}^m \alpha _i = 1$.  
Since our first theorem in the next section gives an inequality for these general spaces, we shortly recall some basic definitions and results from \cite{DP05} and  \cite{P06}.

Let $\mu$ be a $\sigma$-finite positive measure on a $\sigma$-algebra $\cM$ of subsets of a non-void set $X$.
Let $M(X,\mu)$ be the vector space of all equivalence classes of (almost everywhere equal)
complex measurable functions on $X$. A Banach space $L \subseteq M(X,\mu)$ is
called a {\it Banach function space} if $f \in L$, $g \in M(X,\mu)$,
and $|g| \le |f|$ imply that $g \in L$ and $\|g\| \le \|f\|$. We will assume that  $X$ is the carrier of $L$, that is, there is no subset $Y$ of $X$ of 
 strictly positive measure with the property that $f = 0$ a.e. on $Y$ for all $f \in L$ (see \cite{Za83}).
Observe that a Banach sequence space is a Banach function space over a measure space $(R, \mu)$, 
where $\mu$ denotes the counting measure on $R$ (and for $L\in \mathcal{L}$ the set $R$ is the carrier of $L$).

As before, by an {\it operator} on a Banach function space $L$ we always mean a linear
operator on $L$.  An operator $T$ on $L$ is said to be {\it positive} 
if it maps nonnegative functions to nonnegative ones.
Given operators $S$ and $T$ on $L$, we write $S \ge T$ if the operator $S - T$ is positive.

In the special case $L= L^2(X, \mu)$ we can define the {\it numerical radius} $w(T)$ of 
a bounded operator $T$ on $L^2(X, \mu)$ by 
$$ w(T) = \sup \{ | \langle T f, f \rangle | : f \in L^2(X, \mu), \| f \|_2 = 1 \} . $$
If, in addition, $T$ is positive, then it is easy to prove that 
$$ w(T) = \sup \{ \langle T f, f \rangle  : f \in L^2(X, \mu)_+ , \| f \|_2 = 1 \} . $$
From this it follows easily that $w(S) \le w(T)$ for all positive operators $S$ and $T$ on $L^2(X, \mu)$ with $S \le T$.

An operator $K$ on a Banach function space $L$ is called a {\it kernel operator} if
there exists a $\mu \times \mu$-measurable function
$k(x,y)$ on $X \times X$ such that, for all $f \in L$ and for almost all $x \in X$,
$$ \int_X |k(x,y) f(y)| \, d\mu(y) < \infty \ \ \ {\rm and} \ \ 
   (K f)(x) = \int_X k(x,y) f(y) \, d\mu(y)  .$$
One can check that a kernel operator $K$ is positive iff 
its kernel $k$ is non-negative almost everywhere. 
For the theory of Banach function spaces we refer the reader to the book \cite{Za83}.

Let $K$ and $H$ be positive kernel operators on $L$ with kernels $k$ and $h$ respectively,
and $\alpha \ge 0$.
The \textit{Hadamard (or Schur) product} $K \circ H$ of $K$ and $H$ is the kernel operator
with kernel equal to $k(x,y)h(x,y)$ at point $(x,y) \in X \times X$ which can be defined (in general) 
only on some order ideal of $L$. Similarly, the \textit{Hadamard (or Schur) power} 
$K^{(\alpha)}$ of $K$ is the kernel operator with kernel equal to $(k(x, y))^{\alpha}$ 
at point $(x,y) \in X \times X$ which can be defined only on some ideal of $L$.

Let $K_1 ,\ldots, K_n$ be positive kernel operators on a Banach function space $L$, 
and $\alpha _1, \ldots, \alpha _n$ positive numbers such that $\sum_{j=1}^n \alpha _j = 1$.
Then the {\it  Hadamard weighted geometric mean} 
$K = K_1 ^{( \alpha _1)} \circ K_2 ^{(\alpha _2)} \circ \cdots \circ K_n ^{(\alpha _n)}$ of 
the operators $K_1 ,\ldots, K_n$ is a positive kernel operator defined 
on the whole space $L$, since $K \le \alpha _1 K_1 + \alpha _2 K_2 + \ldots + \alpha _n K_n$ by the inequality between the weighted arithmetic and geometric means. Let us recall  the following result which was proved in \cite[Theorem 2.2]{DP05} and 
\cite[Theorem 5.1]{P06}. 

\begin{theorem} 
Let $\{A_{i j}\}_{i=1, j=1}^{k, m}$ be positive kernel operators on a Banach function space $L$.
If $\alpha _1$, $\alpha _2$,..., $\alpha _m$ are positive numbers  
such that $\sum_{j=1}^m \alpha _j = 1$, then the inequalities
(\ref{basic2}), (\ref{norm2}) and (\ref{spectral2}) hold.

If,  in addition,  $L= L^2(X, \mu)$, then  

$$w \! \left( \! \left(A_{1 1}^{(\alpha _1)} \circ \cdots \circ A_{1 m}^{(\alpha _m)}\right) \ldots 
\left(A_{k 1}^{(\alpha _1)} \circ \cdots \circ A_{k m}^{(\alpha _m)} \right) \! \right)$$
\be 
\le  w \! \left( A_{1 1} \cdots  A_{k 1} \right)^{\alpha _1} \cdots 
w \! \left( A_{1 m} \cdots A_{k m}\right)^{\alpha _m}.
\label{glnum}
\ee
\label{DPBfs}
\end{theorem}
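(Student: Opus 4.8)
The plan is to obtain all four families of inequalities from a single device --- the generalized H\"older inequality, applied in turn on a product measure space, on the base space $X$, and (for the numerical radius) against the $L^2$-inner product --- together with the weighted arithmetic--geometric mean (AM--GM) inequality. Write $a_{ij}$ for the kernel of $A_{ij}$, set $B_j:=A_{1j}\cdots A_{kj}$ and $B:=B_1^{(\alpha_1)}\circ\cdots\circ B_m^{(\alpha_m)}$, so that $B$ is the right-hand side of (\ref{basic2}). First I would record that each block $A_{i1}^{(\alpha_1)}\circ\cdots\circ A_{im}^{(\alpha_m)}$ is a positive kernel operator defined on \emph{all} of $L$ with kernel $\prod_{j=1}^{m}a_{ij}(x,y)^{\alpha_j}$, since it is dominated by $\sum_{j=1}^{m}\alpha_j A_{ij}$ by weighted AM--GM (this is the Hadamard weighted geometric mean recalled above); hence $A$, a product of $k$ such blocks, and $B$ are bounded positive kernel operators on $L$. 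As $\mu$ is $\sigma$-finite and all integrands are non-negative, Tonelli's theorem identifies the kernel of $A$ with
$$a(x_0,x_k)=\int_{X^{k-1}}\prod_{i=1}^{k}\prod_{j=1}^{m}a_{ij}(x_{i-1},x_i)^{\alpha_j}\,d\mu(x_1)\cdots d\mu(x_{k-1})$$
and the kernel of $B_j$ with $b_j(x_0,x_k)=\int_{X^{k-1}}\prod_{i=1}^{k}a_{ij}(x_{i-1},x_i)\,d\mu(x_1)\cdots d\mu(x_{k-1})$.

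To prove (\ref{basic2}) I would fix $x_0$ and $x_k$ and apply the generalized H\"older inequality with the conjugate exponents $1/\alpha_1,\ldots,1/\alpha_m$ (conjugate precisely because $\sum_j\alpha_j=1$) to the non-negative functions $g_j(x_1,\ldots,x_{k-1})=\prod_{i=1}^{k}a_{ij}(x_{i-1},x_i)$ on $X^{k-1}$. Since $\prod_{j}g_j^{\alpha_j}$ is exactly the integrand defining $a(x_0,x_k)$ whereas $\int_{X^{k-1}}g_j=b_j(x_0,x_k)$, this gives $a(x_0,x_k)\le\prod_{j=1}^{m}b_j(x_0,x_k)^{\alpha_j}$ for almost every $(x_0,x_k)$, that is, $A\le B$, which is (\ref{basic2}).

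The remaining inequalities all issue from (\ref{basic2}) together with the pointwise bound
$$(Af)(x)\le (Bf)(x)\le\prod_{j=1}^{m}(B_jf)(x)^{\alpha_j}\qquad\text{for }f\in L_{+}\text{ and a.e. }x\in X,$$
the second inequality being H\"older in the variable $y$ (using $f=\prod_j f^{\alpha_j}$, valid since $\sum_j\alpha_j=1$). For (\ref{norm2}) I would first note that the scalar H\"older inequality transfers to the norm of an \emph{arbitrary} Banach function space: if $g_1,\ldots,g_m\in L_{+}$ with $\|g_j\|>0$, then by weighted AM--GM
$$\prod_{j=1}^{m}g_j^{\alpha_j}=\Big(\prod_{j=1}^{m}\|g_j\|^{\alpha_j}\Big)\prod_{j=1}^{m}\Big(\frac{g_j}{\|g_j\|}\Big)^{\alpha_j}\le\Big(\prod_{j=1}^{m}\|g_j\|^{\alpha_j}\Big)\sum_{j=1}^{m}\alpha_j\,\frac{g_j}{\|g_j\|},$$
so the lattice property and the triangle inequality give $\big\|\prod_j g_j^{\alpha_j}\big\|\le\prod_j\|g_j\|^{\alpha_j}$; taking $g_j=B_jf$ and combining with the pointwise bound yields $\|Af\|\le\prod_j\|B_jf\|^{\alpha_j}\le\prod_j\|B_j\|^{\alpha_j}$ whenever $\|f\|_L\le1$, which is (\ref{norm2}). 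For (\ref{spectral2}) I would apply the already-established (\ref{basic2})--(\ref{norm2}) --- valid for any number of rows --- to $A^n$, viewed as a product of $nk$ blocks of the same form (the $k$ blocks of $A$ repeated $n$ times over), obtaining $\|A^n\|\le\prod_j\|B_j^n\|^{\alpha_j}$; extracting $n$th roots and letting $n\to\infty$ in Gelfand's formula $\rho(T)=\lim_n\|T^n\|^{1/n}$ produces $\rho(A)\le\prod_j\rho(B_j)^{\alpha_j}$. Finally, when $L=L^2(X,\mu)$, positivity of $A$ and the $B_j$ gives $w(A)=\sup\{\langle Af,f\rangle:f\in L^2(X,\mu)_{+},\ \|f\|_2=1\}$, and for such $f$ the pointwise bound and H\"older in the variable $x$ (again using $f=\prod_j f^{\alpha_j}$) give
$$\langle Af,f\rangle\le\int_X\prod_{j=1}^{m}\big((B_jf)(x)\,f(x)\big)^{\alpha_j}\,d\mu(x)\le\prod_{j=1}^{m}\langle B_jf,f\rangle^{\alpha_j}\le\prod_{j=1}^{m}w(B_j)^{\alpha_j};$$
taking the supremum over $f$ yields (\ref{glnum}).

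The main obstacle is bookkeeping rather than depth: one must check that the iterated integrals defining the kernels of $A$, $B$ and the $B_j$ are legitimate and agree with integration against the product measure (taken care of by $\sigma$-finiteness and Tonelli) and that the various kernel-level H\"older estimates hold almost everywhere, not merely formally. The one genuinely non-routine ingredient is the transfer of scalar H\"older to the norm of an arbitrary Banach function space via the AM--GM-plus-triangle-inequality argument above; it is precisely the normalisation $\sum_j\alpha_j=1$ that makes the Hadamard weighted geometric means bounded operators on all of $L$ and that powers each of these steps, whereas the sequence space setting of Theorem \ref{DP} can accommodate the weaker hypothesis $\sum_j\alpha_j\ge1$ through the extra structure (bounded entries) of spaces in $\mathcal{L}$.
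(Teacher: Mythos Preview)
The paper does not actually prove this theorem: it is stated in Section~2 as a recall of results established in the authors' earlier papers \cite{DP05} and \cite{P06}, and no argument is given in the present text. So there is no in-paper proof against which to compare your proposal.

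That said, your argument is correct and is essentially the standard route (and, as far as one can tell from the references, the route taken in \cite{DP05} and \cite{P06}). The kernel inequality (\ref{basic2}) follows from the generalized H\"older inequality on the product space $X^{k-1}$ with exponents $1/\alpha_1,\ldots,1/\alpha_m$, exactly as you write; the passage from the pointwise bound to the norm bound (\ref{norm2}) via the AM--GM normalisation trick is the right way to transport H\"older into an arbitrary Banach function space; the spectral radius bound (\ref{spectral2}) via $\|A^n\|\le\prod_j\|B_j^n\|^{\alpha_j}$ and Gelfand's formula is the natural argument; and the numerical radius step uses precisely the positive-cone description of $w(\cdot)$ that the paper records just before stating the theorem. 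Your closing remark about why $\sum_j\alpha_j=1$ (rather than $\ge 1$) is needed here, in contrast with Theorem~\ref{DP}, is also on point: without it the Hadamard weighted geometric mean need not be a bounded operator on a general Banach function space, and the H\"older exponents would no longer be conjugate.

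One very small comment: in the norm step you tacitly restrict to $f\in L_+$, which is justified by the identity (\ref{equiv_op}) recorded earlier in the paper; and the edge case $\|g_j\|=0$ in your AM--GM normalisation is harmless since then the product vanishes. Neither of these is a gap.
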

The following result is a special case  of Theorem \ref{DPBfs}. 
\begin{theorem} 
Let $A_1 ,\ldots, A_n$ be positive kernel operators on a Banach function space  $L$,
and $\alpha _1, \ldots, \alpha _n$ positive numbers such that $\sum_{j=1}^n \alpha _j = 1$.
Then the inequalities (\ref{gl1nrm}) and (\ref{gl1vecr}) hold.

If, in addition,  $L= L^2(X, \mu)$,  then
\be  
w(A_1 ^{( \alpha _1)} \circ A_2 ^{(\alpha _2)} \circ \cdots \circ A_m ^{(\alpha _m)} ) \le
w(A_1)^{ \alpha _1} \, w(A_2)^{\alpha _2} \cdots w(A_m)^{\alpha _m} .
\label{gl1num} 
\ee
\end{theorem}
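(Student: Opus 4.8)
The plan is to obtain this statement as the degenerate case $k = 1$ of Theorem~\ref{DPBfs}. First I would put $A_{1 j} := A_j$ for $j = 1, \ldots, m$ (identifying $n$ with $m$), so that the single factor $\left(A_{1 1}^{(\alpha_1)} \circ \cdots \circ A_{1 m}^{(\alpha_m)}\right)$ occurring in Theorem~\ref{DPBfs} is precisely the Hadamard weighted geometric mean $A_1^{(\alpha_1)} \circ \cdots \circ A_m^{(\alpha_m)}$. Since $\sum_{j=1}^m \alpha_j = 1$, the paragraph immediately preceding Theorem~\ref{DPBfs} already guarantees that this Hadamard weighted geometric mean is a positive kernel operator defined on all of $L$ (being dominated by $\sum_{j=1}^m \alpha_j A_j$ via the arithmetic--geometric mean inequality), so that the quantities $\|A_1^{(\alpha_1)} \circ \cdots \circ A_m^{(\alpha_m)}\|$, $\rho(A_1^{(\alpha_1)} \circ \cdots \circ A_m^{(\alpha_m)})$ and, when $L = L^2(X,\mu)$, $w(A_1^{(\alpha_1)} \circ \cdots \circ A_m^{(\alpha_m)})$ are all meaningful.

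The conclusions are then simply read off. With $k = 1$ the products $A_{1 1} \cdots A_{k 1}, \ \ldots, \ A_{1 m} \cdots A_{k m}$ collapse to $A_1, \ldots, A_m$, so inequality~(\ref{norm2}) becomes exactly~(\ref{gl1nrm}), inequality~(\ref{spectral2}) becomes exactly~(\ref{gl1vecr}), and, in the Hilbert space case, inequality~(\ref{glnum}) becomes exactly~(\ref{gl1num}). One may note in passing that (\ref{basic2}) with $k = 1$ reduces to the trivial identity $A = A$ and contributes nothing new.

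I do not anticipate any genuine obstacle here: the proof amounts to a one-line invocation of Theorem~\ref{DPBfs}. The only points that deserve a word of care are the harmless discrepancy between the indices $n$ and $m$ in the two formulations and the fact that the operators on the left-hand sides are globally defined on $L$, and both of these are already settled by the remarks recalled in the preliminaries.
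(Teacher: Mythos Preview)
Your proposal is correct and matches the paper's approach exactly: the paper simply states that this result is a special case of Theorem~\ref{DPBfs}, which is precisely the $k=1$ specialization you describe.
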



\section{Results}

We begin with a new proof of (\ref{Hu}) that is based on the inequality (\ref{gl1vecr}).

\begin{theorem} 
\label{Bfs}
Let $A_1, \ldots , A_m$ be positive kernel operators on a Banach function space $L$. 
Then 
\be 
\rho \left(A_1^{\left(\frac{1}{m}\right)} \circ A_2^{\left(\frac{1}{m}\right)} \circ \cdots \circ 
A_m^{\left(\frac{1}{m}\right)}\right)   \le \rho (A_1 A_2 \cdots A_m)^{\frac{1}{m}} .  
\label{genHuBfs}
\ee

If, in addition, $L \in \mathcal{L}$ (and so $A_1, \ldots , A_m$ can be considered as non-negative matrices that define operators on $L$),  then 
\be 
\rho (A_1 \circ A_2 \circ \cdots \circ A_m) \le \rho (A_1 A_2 \cdots A_m)  .  
\label{genHu}
\ee
\end{theorem}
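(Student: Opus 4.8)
The plan is to derive both inequalities directly from the Hadamard-weighted-geometric-mean estimate \eqref{gl1vecr} in Corollary \ref{dp} (equivalently, from \eqref{gl1num}/the $L^2$-version already recalled), applied not to the operators $A_1,\dots,A_m$ themselves but to a cyclically reshuffled family. The point is that $\rho(A_1A_2\cdots A_m)=\rho(A_jA_{j+1}\cdots A_mA_1\cdots A_{j-1})$ for every $j$ by the commutation property $\rho(ST)=\rho(TS)$, so each cyclic permutation of the product $A_1\cdots A_m$ has the same spectral radius.

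First I would set $P_j := A_j A_{j+1}\cdots A_m A_1 \cdots A_{j-1}$ for $j=1,\dots,m$, the $j$-th cyclic shift of the product, so $\rho(P_j)=\rho(A_1\cdots A_m)$ for all $j$. Next I would observe that each $A_i$ occurs exactly once in position $i$ across an appropriate alignment, so that the Hadamard product $A_1^{(1/m)}\circ\cdots\circ A_m^{(1/m)}$, when raised to the $m$-th power in the Hadamard sense, relates to the product $P_1\circ P_2\circ\cdots\circ P_m$. Concretely, the key algebraic identity to check is
$$
\left(A_1^{(1/m)}\circ\cdots\circ A_m^{(1/m)}\right)^m \;=\; \text{(a Hadamard product of cyclic shifts)},
$$
but it is cleaner to argue at the level of \eqref{gl1vecr} itself: apply that inequality with the $n=m$ operators taken to be $P_1,\dots,P_m$ and all weights $\alpha_j=1/m$, giving
$$
\rho\!\left(P_1^{(1/m)}\circ\cdots\circ P_m^{(1/m)}\right)\;\le\;\rho(P_1)^{1/m}\cdots\rho(P_m)^{1/m}\;=\;\rho(A_1\cdots A_m).
$$
Then I would show, using the submultiplicativity/associativity of the Hadamard product together with Proposition \ref{2} (the inequality \eqref{gl} for Hadamard powers of products, with $t\ge 1$), that
$$
A_1^{(1/m)}\circ A_2^{(1/m)}\circ\cdots\circ A_m^{(1/m)}\;\le\;\left(P_1^{(1/m)}\circ\cdots\circ P_m^{(1/m)}\right)^{\text{?}},
$$
i.e.\ that the left-hand side of \eqref{genHuBfs} is dominated (as a positive kernel operator) by $P_1^{(1/m)}\circ\cdots\circ P_m^{(1/m)}$, whence monotonicity of $\rho$ on positive kernel operators finishes the argument. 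For the second statement, once $L\in\mathcal L$ the operators are honest non-negative matrices, $A_i^{(1)}=A_i$, and specializing the exponent gives $\rho(A_1\circ\cdots\circ A_m)\le\rho(A_1\cdots A_m)$ directly, or one reruns the same comparison with all Hadamard powers equal to $1$.

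The main obstacle I anticipate is the bookkeeping in the middle step: making precise the comparison between the single Hadamard product $A_1^{(1/m)}\circ\cdots\circ A_m^{(1/m)}$ of \emph{individual} operators and a Hadamard product of the \emph{long cyclic products} $P_j^{(1/m)}$, since the entries of $P_j$ are sums over multi-indices and one must expand, match indices $(i,j)$ correctly across all $m$ shifts, and invoke \eqref{gl} to pass a power of $1/m$ outside the product while keeping the inequality in the right direction (this is where the hypothesis $\sum\alpha_j=1$, i.e.\ the exponents $1/m$ summing to $1$, and the requirement $t\ge 1$ in Proposition \ref{2} both enter). Everything else—cyclic invariance of $\rho$, monotonicity of $\rho$ and of $w$ on positive operators, and reduction of the matrix case—is routine given the results already quoted.
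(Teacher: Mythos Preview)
Your plan has the right architecture---exploit cyclic invariance $\rho(P_j)=\rho(A_1\cdots A_m)$ and then apply the Hadamard-geometric-mean bound \eqref{gl1vecr} to the cyclic products $P_j$---but the crucial middle step is misstated and, as written, false. You assert that $A_1^{(1/m)}\circ\cdots\circ A_m^{(1/m)}$ is dominated entrywise by $P_1^{(1/m)}\circ\cdots\circ P_m^{(1/m)}$. Take $m=2$ and $A_1=A_2=\left[\begin{smallmatrix}0&1\\1&0\end{smallmatrix}\right]$: the left side equals $\left[\begin{smallmatrix}0&1\\1&0\end{smallmatrix}\right]$ while $P_1=P_2=I$, so the right side is $I$, and the claimed domination fails off the diagonal. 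Proposition~\ref{2} cannot help here, since it requires $t\ge 1$ whereas your exponent is $1/m\le 1$.

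What does work is the variant you mention and then abandon: compare not $A_1^{(1/m)}\circ\cdots\circ A_m^{(1/m)}$ itself but its $m$-th \emph{ordinary} power to the cyclic Hadamard product. Writing each of the $m$ identical factors with a different cyclic ordering inside the Hadamard product and applying \eqref{basic2} of Theorem~\ref{DPBfs} (not Proposition~\ref{2}) with $k=m$ and all $\alpha_j=1/m$ gives
\[
\bigl(A_1^{(1/m)}\circ\cdots\circ A_m^{(1/m)}\bigr)^m \;\le\; P_1^{(1/m)}\circ\cdots\circ P_m^{(1/m)},
\]
after which $\rho(\,\cdot\,)^m$ together with \eqref{gl1vecr} applied to the $P_j$ yields \eqref{genHuBfs}. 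For \eqref{genHu} you cannot simply ``specialize the exponent'' in \eqref{genHuBfs}; rather, when $L\in\mathcal L$ you rerun the same argument with all $\alpha_j=1$ (now $\sum_j\alpha_j=m\ge 1$ is permitted by Theorem~\ref{DP}). This corrected route is essentially the argument the paper gives later for Theorem~\ref{genP1}. For the present theorem the paper takes a different path: it embeds the $A_i$ into an $m\times m$ block cyclic-shift operator $T$ on $L^m$, observes that $T^m$ is block-diagonal so that $\rho(T)^m=\rho(A_1\cdots A_m)$, and applies \eqref{gl1vecr} directly to the $m$ cyclic reshuffles $T_1,\ldots,T_m$; the Hadamard product $T_1^{(1/m)}\circ\cdots\circ T_m^{(1/m)}$ is itself a block shift whose spectral radius equals $\rho\bigl(A_1^{(1/m)}\circ\cdots\circ A_m^{(1/m)}\bigr)$, avoiding the $m$-th-power detour altogether.
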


\begin{proof}
The block matrix 
$$ T = T(A_1, A_2, \ldots, A_m) := \left[
\begin{matrix}  
0 & A_1 & 0 &  0 & \ldots & 0 & 0 \cr
0 & 0 &  A_2 & 0  & \ldots & 0 & 0 \cr
0 & 0 & 0 & A_3  & \ldots & 0 & 0 \cr
\vdots & \vdots & \vdots & \ddots & \ddots & \vdots &\vdots \cr
\vdots & \vdots & \vdots & \vdots & \ddots & \ddots &\vdots \cr
0 & 0 & 0 & 0 & \ldots & 0 & A_{m-1} \cr 
A_m & 0 & 0 & 0 & \ldots & 0 & 0
\end{matrix}  \right]  $$
defines a positive kernel operator on the cartesian product of $m$ copies of $L$. Since $T^m$ has a diagonal form
$$ T^m = \textrm{diag} \, \left( A_1 A_2 \cdots A_m,  A_2 A_3 \cdots A_m A_1,   A_3 A_4 \cdots A_m A_1 A_2, 
\ldots,  A_m A_1 A_2 \cdots A_{m-1} \right) , $$
we have  $\rho (T)^m = \rho (T^m) = \rho ( A_1 A_2 \cdots A_m )$.

Now define $T_k := T(A_k, A_{k+1}, \ldots, A_m, A_1, \ldots, A_{k-1})$ for $k =1, 2, \ldots, m$. 
Then $\rho (T_k)^m = \rho ( A_1 A_2 \cdots A_m )$ for each $k$. Using the inequality (\ref{gl1vecr}) we obtain that
$$  \rho \left(T_1^{\left(\frac{1}{m}\right)} \circ T_2^{\left(\frac{1}{m}\right)} \circ \cdots \circ 
T_m^{\left(\frac{1}{m}\right)}\right) \le \left( \rho(T_1) \, \rho(T_2) \cdots \rho(T_m)  \right)^{\frac{1}{m}} =
\rho (A_1 A_2 \cdots A_m)^{\frac{1}{m}}  . $$  
 Since 
$$ \rho \left(T_1^{\left(\frac{1}{m}\right)} \circ T_2^{\left(\frac{1}{m}\right)} \circ \cdots \circ 
     T_m^{\left(\frac{1}{m}\right)}\right) = 
\rho \left(A_1^{\left(\frac{1}{m}\right)} \circ A_2^{\left(\frac{1}{m}\right)} \circ \cdots \circ 
A_m^{\left(\frac{1}{m}\right)}\right) , $$ 
the inequality (\ref{genHuBfs}) is proved.

If, in addition, $L \in \mathcal{L}$, then we apply the inequality 
$$  \rho (T_1 \circ T_2 \circ \cdots \circ T_m) \le \rho(T_1) \, \rho(T_2) \cdots \rho(T_m) $$
that is a special case of the inequality (\ref{gl1vecr}). We then observe that 
$\rho (T_1 \circ T_2 \circ \cdots \circ T_m) = \rho (A_1 \circ A_2 \circ \cdots \circ A_m)$
and $\rho(T_1) \, \rho(T_2) \cdots \rho(T_m) = \rho (A_1 A_2 \cdots A_m)$.
This completes the proof.
\end{proof}

It should be mentioned that the special case of inequality (\ref{genHuBfs}) for pairs of operators on $L^p$-spaces was already given in 
\cite[Theorem 2.8]{S11}.

The following theorem generalizes the inequalities (\ref{mix1}) to several matrices, and it provides an alternative proof 
of the inequality (\ref{genHu}). We also establish related inequalities for the operator norm and the numerical radius.  

\begin{theorem} 
Given $L \in \mathcal{L}$, let $A_1, \ldots , A_m$ be non-negative matrices that define operators on $L$. 
For $t \in [1,m]$ and $i=1, \ldots, m$, put \\
 $P_i = A_i^{(t)} A_{i+1}^{(t)} \cdots A_m^{(t)} A_1^{(t)} A_2^{(t)} \cdots A_{i-1}^{(t)}$.  Then 
$$\rho (A_1 \circ \cdots \circ A_m)  \le  
\rho \left( P_1^{(\frac{1}{t})} \circ \cdots \circ P_m^{(\frac{1}{t})} \right)^{ \frac{1}{m}} \le $$
\be
\le \rho (A_1^{(t)} \cdots A_m^{(t)})^{\frac{1}{t}} \le \rho ((A_1 \cdots A_m )^{(t)})^{\frac{1}{t}} 
\le \rho (A_1\cdots A_m)
\label{poslpP1}
\ee
and
$$
\|(A_1 \circ \cdots \circ A_m)^m\|  \le  \| P_1^{(\frac{1}{t})} \circ \cdots \circ P_m^{(\frac{1}{t})} \| \le
 \left( \| P_1\| \cdots \|P_m\| \right)^{ \frac{1}{t}} \le
$$
$$ \le   \left( \|  (A_1  A_2  \cdots A_m) ^{(t)}\|   \| (A_2  \cdots A_m  A_1) ^{(t)}\|  \cdots \|(A_m   A_1  \cdots  A_{m-1}) ^{(t)}\| \right)^{ \frac{1}{t}} \le $$
\be
\le   \|  A_1  A_2  \cdots A_m \|  \| A_2  \cdots A_m  A_1\| \cdots \|A_m   A_1  \cdots  A_{m-1}\| .
\label{poslpP1norm}
\ee
If, in addition, $L=l^2 (R)$ and $t= m$, then
$$
w((A_1 \circ \cdots \circ A_m)^m)  \le 
w \left( P_1^{(\frac{1}{m})} \circ \cdots \circ P_m^{(\frac{1}{m})} \right) \le 
 \left( w( P_1) \cdots w(P_m)  \right)^{ \frac{1}{m}} \le $$
\be
\le   \left( w((A_1  A_2  \cdots A_m) ^{(m)}) \, w((A_2  \cdots A_m  A_1) ^{(m)})  \cdots 
w((A_m   A_1  \cdots  A_{m-1}) ^{(m)})  \right)^{ \frac{1}{m}} .
\label{poslpP1num}
\ee
\label{genP1}
\end{theorem}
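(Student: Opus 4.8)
\emph{Proof plan.} The plan is to reduce all three displays to a single entrywise matrix inequality,
\be
(A_1 \circ \cdots \circ A_m)^m \ \le\ P_1^{(1/t)} \circ P_2^{(1/t)} \circ \cdots \circ P_m^{(1/t)} ,
\label{entrysq}
\ee
between non-negative matrices that define operators on $L$ (the right-hand side does so by Theorem~\ref{DP}). Granting (\ref{entrysq}), the first inequality of (\ref{poslpP1}) follows from monotonicity of the spectral radius for non-negative matrices together with the identity $\rho(C^m)=\rho(C)^m$ applied to $C=A_1\circ\cdots\circ A_m$; the first inequality of (\ref{poslpP1norm}) follows from monotonicity of the operator norm; and, when $L=l^2(R)$ and $t=m$, the first inequality of (\ref{poslpP1num}) follows from the monotonicity of the numerical radius for positive operators on $L^2$ recalled in Section~2.

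To prove (\ref{entrysq}) I would invoke inequality (\ref{basic2}) of Theorem~\ref{DP} with $k=m$ rows, $m$ columns, equal weights $\alpha_1=\cdots=\alpha_m=\frac{1}{t}$ --- admissible because $\sum_{j=1}^m\alpha_j=\frac{m}{t}\ge1$ when $t\le m$ --- and with the choice $A_{pq}:=A_{q+p-1}^{(t)}$, the lower index read cyclically in $\{1,\dots,m\}$; each of these defines an operator on $L$ by Proposition~\ref{2}, since $t\ge1$. For a fixed row $p$ the map $q\mapsto q+p-1$ is a permutation of $\{1,\dots,m\}$, so, using $(A_i^{(t)})^{(1/t)}=A_i$, the $p$-th factor in the product on the left of (\ref{basic2}) equals $A_{p1}^{(1/t)}\circ\cdots\circ A_{pm}^{(1/t)}=A_1\circ\cdots\circ A_m$, and the product of these $m$ factors is $(A_1\circ\cdots\circ A_m)^m$; for a fixed column $q$ the down-column product is $A_{1q}A_{2q}\cdots A_{mq}=A_q^{(t)}A_{q+1}^{(t)}\cdots A_{q-1}^{(t)}=P_q$, so the right-hand side of (\ref{basic2}) is exactly $P_1^{(1/t)}\circ\cdots\circ P_m^{(1/t)}$. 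The main obstacle is precisely this bookkeeping --- arranging the double array $(A_{pq})$ so that the cyclic products $P_q$ appear down the columns while $A_1\circ\cdots\circ A_m$ is reconstructed along every row --- but it is purely combinatorial, and no genuinely new estimate is required.

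The remaining links in the three chains are direct consequences of the results recalled in Section~2. For (\ref{poslpP1}): inequality (\ref{gl1vecr}) with weights $\frac{1}{t}$ gives $\rho(P_1^{(1/t)}\circ\cdots\circ P_m^{(1/t)})\le(\rho(P_1)\cdots\rho(P_m))^{1/t}$, and since $\rho(P_i)=\rho(A_1^{(t)}\cdots A_m^{(t)})$ for every $i$ by the cyclicity $\rho(UV)=\rho(VU)$, raising both sides to the power $\frac{1}{m}$ yields the second inequality; the third follows from $A_1^{(t)}\cdots A_m^{(t)}\le(A_1\cdots A_m)^{(t)}$ (inequality (\ref{gl})) and monotonicity of $\rho$; the fourth from (\ref{gl1vecr}) applied to the single operator $A_1\cdots A_m$ with weight $t\ge1$. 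The chain (\ref{poslpP1norm}) is obtained identically, using (\ref{gl1nrm}) in place of (\ref{gl1vecr}), the bounds $\|P_i\|\le\|(A_iA_{i+1}\cdots A_{i-1})^{(t)}\|$ coming from (\ref{gl}), and the single-operator case of (\ref{gl1nrm}); note that here the cyclic products cannot be permuted under $\|\cdot\|$, which is why the right-hand sides appear in cyclically shifted form. Finally, for (\ref{poslpP1num}) one is forced to take $t=m$, so that the weights $\frac{1}{m}$ sum to $1$, and then applies (\ref{gl1num}) together with $w(P_i)\le w((A_iA_{i+1}\cdots A_{i-1})^{(m)})$, again coming from (\ref{gl}) and monotonicity of $w$. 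Putting these estimates together proves (\ref{poslpP1}), (\ref{poslpP1norm}) and (\ref{poslpP1num}).
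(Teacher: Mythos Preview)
Your proposal is correct and follows essentially the same approach as the paper: you establish the entrywise inequality $(A_1\circ\cdots\circ A_m)^m\le P_1^{(1/t)}\circ\cdots\circ P_m^{(1/t)}$ via inequality (\ref{basic2}) with weights $\frac{1}{t}$ summing to $\frac{m}{t}\ge1$, and then derive the remaining links from (\ref{gl1vecr}), (\ref{gl}), (\ref{gl3}) (respectively (\ref{gl1nrm}), (\ref{gl1num}) for the norm and numerical-radius chains). The paper organizes the same application of (\ref{basic2}) by writing $H_i=(A_1\circ\cdots\circ A_m)^{(t)}$ so that the left side is $H_1^{(1/t)}\cdots H_m^{(1/t)}$, while you spell out the array $A_{pq}=A_{q+p-1}^{(t)}$ explicitly; these are two notations for the same computation.
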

\begin{proof} 
Similarly as $P_i$, we define the Hadamard product 
$$ H_i = A_i^{(t)} \circ A_{i+1}^{(t)} \circ \cdots \circ A_m^{(t)} \circ A_1^{(t)} \circ A_2^{(t)} \circ \cdots \circ A_{i-1}^{(t)} = $$ 
$$ = (A_i \circ A_{i+1} \circ \cdots \circ A_m \circ A_1 \circ A_2 \circ \cdots \circ A_{i-1})^{(t)} = 
 (A_1 \circ \cdots \circ A_m)^{(t)} ,$$
so that, in fact, $H_1 = H_2 = \ldots = H_m$.
Let us prove the inequalities (\ref{poslpP1}). Since $\frac{m}{t} \ge 1$, we apply the inequality (\ref{basic2})
to obtain the inequality
$$ (A_1 \circ \cdots \circ A_m)^m =   H_1^{(\frac{1}{t})} \cdots H_m^{(\frac{1}{t})} \le 
     P_1^{(\frac{1}{t})} \circ \cdots \circ P_m^{(\frac{1}{t})} . $$
Therefore, we have 
$$ \rho (A_1 \circ \cdots \circ A_m)^m =\rho ((A_1 \circ \cdots \circ A_m)^m) 
\le \rho \left( P_1^{(\frac{1}{t})} \circ \cdots \circ P_m^{(\frac{1}{t})} \right) , $$
proving the first inequality in (\ref{poslpP1}). 
Since $\frac{m}{t} \ge 1$, for the proof of the second inequality in (\ref{poslpP1}) we can apply 
the inequality (\ref{gl1vecr}) to obtain that
$$ \rho \left( P_1^{(\frac{1}{t})} \circ \cdots \circ P_m^{(\frac{1}{t})} \right) \le 
\left( \rho(P_1) \cdots \rho(P_m) \right)^{\frac{1}{t}} =  \rho (A_1^{(t)} \cdots A_m^{(t)})^{\frac{m}{t}} . $$
Using the inequalities (\ref{gl}) and  (\ref{gl3}) we prove the remaining inequalities in (\ref{poslpP1}):
$$ \rho (A_1^{(t)} \cdots A_m^{(t)}) \le \rho ((A_1 \cdots A_m )^{(t)}) \le 
\rho (A_1\cdots A_m)^t . $$

The inequalities (\ref{poslpP1norm}) and (\ref{poslpP1num}) are proved in a similar way. 
\end{proof}

\begin{corollary} 
Given $L \in \mathcal{L}$, let $A$ and $B$ be non-negative matrices that define operators on $L$. 
Then, for every $t \in [1, 2]$,
$$ \rho (A \circ B)  \le  \rho \left( (A^{(t)} B^{(t)})^{(\frac{1}{t})} \circ  
(B^{(t)} A^{(t)})^{(\frac{1}{t})} \right)^{ \frac{1}{2}} \le 
\rho (A^{(t)} B^{(t)})^{\frac{1}{t}}  \le \rho ((A B)^{(t)})^{\frac{1}{t}}  \le \rho (A B ) $$
and 
$$ \| (A \circ B)^2\|  \le  \|  (A^{(t)} B^{(t)})^{(\frac{1}{t})} \circ  
(B^{(t)} A^{(t)})^{(\frac{1}{t})} \| \le 
\left( \| A^{(t)} B^{(t)}\| \| B^{(t)} A^{(t)}\| \right)^{\frac{1}{t}}  \le $$
$$ \le (\| (A B)^{(t)}\|  \| (B A)^{(t)}\| )^{\frac{1}{t}}  \le \| A B \| \| BA \| . $$
If, in addition, $L=l^2 (R)$, then
$$ w((A \circ B)^2)  \le  
w \left( (A^{(2)} B^{(2)})^{(\frac{1}{2})} \circ (B^{(2)} A^{(2)})^{(\frac{1}{2})} \right) \le  $$
$$ \le  \left( w( A^{(2)} B^{(2)}) \, w(B^{(2)} A^{(2)})  \right)^{ \frac{1}{2}} \le 
 \left( w((A  B) ^{(2)}) \, w((B  A) ^{(2)}) \right)^{ \frac{1}{2}} .$$
\end{corollary}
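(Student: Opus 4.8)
The plan is to obtain this corollary as the special case $m = 2$, $A_1 = A$, $A_2 = B$ of Theorem \ref{genP1}. First I would note that for $m = 2$ the parameter range $t \in [1, m]$ from Theorem \ref{genP1} is exactly $t \in [1,2]$, and that the operators $P_i = A_i^{(t)} A_{i+1}^{(t)} \cdots A_m^{(t)} A_1^{(t)} \cdots A_{i-1}^{(t)}$ appearing there reduce to $P_1 = A^{(t)} B^{(t)}$ and $P_2 = B^{(t)} A^{(t)}$. Substituting these into the chain (\ref{poslpP1}) and using $\rho(P_1^{(1/t)} \circ P_2^{(1/t)})^{1/2} = \rho\bigl((A^{(t)}B^{(t)})^{(1/t)} \circ (B^{(t)}A^{(t)})^{(1/t)}\bigr)^{1/2}$, $\rho(A_1^{(t)} A_2^{(t)})^{1/t} = \rho(A^{(t)} B^{(t)})^{1/t}$, and $\rho((A_1 A_2)^{(t)})^{1/t} = \rho((AB)^{(t)})^{1/t}$ yields the first displayed chain of the corollary verbatim.

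Next I would treat the operator-norm chain the same way, starting from (\ref{poslpP1norm}) with $m = 2$. The only point requiring attention is that the cyclic products appearing in Theorem \ref{genP1} collapse for two matrices: $\|(A_1 \circ A_2)^2\| = \|(A \circ B)^2\|$, $\|P_1\|\,\|P_2\| = \|A^{(t)} B^{(t)}\|\,\|B^{(t)} A^{(t)}\|$, $\|(A_1 A_2)^{(t)}\|\,\|(A_2 A_1)^{(t)}\| = \|(AB)^{(t)}\|\,\|(BA)^{(t)}\|$, and $\|A_1 A_2\|\,\|A_2 A_1\| = \|AB\|\,\|BA\|$; reading these off gives the second display of the corollary. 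For the numerical radius I would invoke the last part of Theorem \ref{genP1}, whose hypothesis $L = l^2(R)$, $t = m$ becomes $L = l^2(R)$, $t = 2$ here; the same two-term collapse turns (\ref{poslpP1num}) into the corollary's third display.

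Since the corollary is a direct specialization, there is essentially no obstacle. The only genuine bookkeeping is verifying that the cyclically shifted products $A_i^{(t)} A_{i+1}^{(t)} \cdots A_{i-1}^{(t)}$, written out for $i = 1$ and $i = 2$, are precisely $A^{(t)} B^{(t)}$ and $B^{(t)} A^{(t)}$, and checking that the factor $H_1 = H_2 = (A \circ B)^{(t)}$ still satisfies $H_1^{(1/t)} H_2^{(1/t)} = (A \circ B)^2$ (using $A \circ B = B \circ A$), so that the input estimate $(A \circ B)^2 \le (A^{(t)} B^{(t)})^{(1/t)} \circ (B^{(t)} A^{(t)})^{(1/t)}$ coming from (\ref{basic2}) is the correct one. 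Alternatively, one could reprove the three chains from scratch by imitating the proof of Theorem \ref{genP1}: apply (\ref{basic2}) to the four matrices $A^{(t)}, B^{(t)}, B^{(t)}, A^{(t)}$ suitably arranged, with weights $\alpha_1 = \alpha_2 = \tfrac{1}{t}$ (admissible because $\tfrac{2}{t} \ge 1$), to get the displayed input estimate, and then feed it successively into (\ref{gl1vecr}), (\ref{gl}), (\ref{gl3}) and their operator-norm and numerical-radius analogues; but the specialization route is the shortest.
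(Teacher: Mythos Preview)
Your proposal is correct and is precisely the paper's own approach: the corollary is stated without proof, being the immediate specialization of Theorem~\ref{genP1} to $m=2$, $A_1=A$, $A_2=B$, so that $t\in[1,2]$, $P_1=A^{(t)}B^{(t)}$, $P_2=B^{(t)}A^{(t)}$, and the three chains (\ref{poslpP1}), (\ref{poslpP1norm}), (\ref{poslpP1num}) become exactly the three displays of the corollary. Your additional remarks about $H_1=H_2$ and the alternative direct argument are correct but unnecessary for this direct specialization.
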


As a consequence of Theorem \ref{genP1} we obtain the following infinite dimensional generalization and refinement of (\ref{CZ}), which was the main result of \cite{CZ15}.
\begin{corollary} Given $L \in \mathcal{L}$ and $m\ge 2$, let $A_1, \ldots , A_m$ be
non-negative matrices that define operators on $L$. For $t \in [1,m]$ and $i=1, \ldots, m$, put 
$P_i = A_i^{(t)} A_{i+1}^{(t)} \cdots A_m^{(t)} A_1^{(t)} A_2^{(t)} \cdots A_{i-1}^{(t)}$.  Then 
$$
\rho (A_1 \circ A_2 \circ \cdots \circ A_m) \le  \rho (A_1 \circ A_2 \circ \cdots \circ A_m)^{1- \frac{t}{m}}  \rho \left( P_1^{(\frac{1}{t})} \circ \cdots \circ P_m^{(\frac{1}{t})} \right)^{ \frac{t}{m^2}}
$$
$$\le \rho (A_1 \circ A_2 \circ \cdots \circ A_m)^{1- \frac{t}{m}} \rho (A_1^{(t)} \cdots A_m^{(t)} )^{\frac{1}{m}} $$
\be
\le   \rho (A_1 \circ A_2 \circ \cdots \circ A_m)^{1- \frac{t}{m}}  \rho ((A_1 \cdots A_m )^{(t)}) ^{\frac{1}{m}} \le \rho (A_1 A_2 \cdots A_m).
\label{refCZ}
\ee
\label{fromCZ}
\end{corollary}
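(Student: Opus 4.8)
The plan is to deduce Corollary \ref{fromCZ} directly from Theorem \ref{genP1}, the inequality (\ref{genHu}) and the monotonicity of $\rho$; the whole argument is essentially exponent bookkeeping applied to the chain (\ref{poslpP1}). Abbreviate $r = \rho(A_1 \circ A_2 \circ \cdots \circ A_m)$, $Q = \rho\!\left(P_1^{(1/t)} \circ \cdots \circ P_m^{(1/t)}\right)$ and $s = \rho(A_1 A_2 \cdots A_m)$. If $r = 0$, the leftmost term of (\ref{refCZ}) vanishes and, since all four quantities appearing in (\ref{refCZ}) are non-negative, the chain holds trivially; so I may assume $r > 0$. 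Note also that $1 - \tfrac{t}{m} \ge 0$ because $t \in [1,m]$, so $r^{1 - t/m}$ is a positive real number and may be multiplied through an inequality without reversing it.

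For the first inequality of (\ref{refCZ}): raise the first inequality $r \le Q^{1/m}$ of (\ref{poslpP1}) to the power $\tfrac{t}{m}$ to get $r^{t/m} \le Q^{t/m^2}$, then multiply by $r^{1-t/m}$ to obtain $r \le r^{1-t/m}Q^{t/m^2}$. For the second inequality of (\ref{refCZ}): raise the second inequality $Q^{1/m} \le \rho(A_1^{(t)}\cdots A_m^{(t)})^{1/t}$ of (\ref{poslpP1}) to the power $\tfrac{t}{m}$ to get $Q^{t/m^2} \le \rho(A_1^{(t)}\cdots A_m^{(t)})^{1/m}$, then multiply by $r^{1-t/m}$. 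For the third inequality of (\ref{refCZ}): the inequality (\ref{gl}) gives $A_1^{(t)}\cdots A_m^{(t)} \le (A_1\cdots A_m)^{(t)}$, hence $\rho(A_1^{(t)}\cdots A_m^{(t)}) \le \rho((A_1\cdots A_m)^{(t)})$ by monotonicity of $\rho$; raising to the power $\tfrac1m$ and multiplying by $r^{1-t/m}$ gives the claim.

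It remains to prove the last inequality of (\ref{refCZ}). The last inequality of (\ref{poslpP1}) gives $\rho((A_1\cdots A_m)^{(t)})^{1/t} \le s$; raising it to the power $\tfrac{t}{m}$ gives $\rho((A_1\cdots A_m)^{(t)})^{1/m} \le s^{t/m}$. Combining this with $r \le s$ (which is (\ref{genHu})) and with $(1-\tfrac{t}{m})+\tfrac{t}{m}=1$ we obtain
$$ r^{1-\frac{t}{m}}\,\rho\big((A_1 \cdots A_m)^{(t)}\big)^{\frac1m} \;\le\; s^{1-\frac{t}{m}}\,s^{\frac{t}{m}} \;=\; s \;=\; \rho(A_1 A_2 \cdots A_m), $$
which closes the chain. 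I do not anticipate any genuine obstacle: the statement is a mechanical rearrangement of Theorem \ref{genP1} and (\ref{genHu}), and the only delicate points are the trivial case $r=0$ and the inequality $1-\tfrac{t}{m}\ge 0$, which legitimises multiplying and cancelling the common factor $r^{1-t/m}$ throughout. The hypothesis $L\in\mathcal{L}$ rather than a general Banach function space is inherited from Theorem \ref{genP1}, whose proof invokes (\ref{basic2}) with Hadamard-power exponents summing to $\tfrac{m}{t}\ge 1$ (not necessarily equal to $1$), which is exactly the generality available in the sequence-space version of Theorem \ref{DP}.
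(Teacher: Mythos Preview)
Your proof is correct and follows essentially the same approach as the paper's: the paper simply writes $\rho(A_1\circ\cdots\circ A_m)=\rho(A_1\circ\cdots\circ A_m)^{1-t/m}\rho(A_1\circ\cdots\circ A_m)^{t/m}$ and then invokes the chain (\ref{poslpP1}) on the second factor, which is precisely what you carry out in detail. Your explicit treatment of the case $r=0$ and of the sign of $1-\tfrac{t}{m}$ makes the argument more self-contained, but there is no genuine difference in method.
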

\begin{proof} Since 
 $$\rho (A_1 \circ A_2 \circ \cdots \circ A_m) =  \rho (A_1 \circ A_2 \circ \cdots \circ A_m)^{1- \frac{t}{m}} \rho (A_1 \circ A_2 \circ \cdots \circ A_m)^{\frac{t}{m}},$$
the result follows by applying (\ref{poslpP1}). 
\end{proof}

By applying Theorems  \ref{DP} and  \ref{genP1} we obtain
the following result which generalizes  \cite[Proposition 2.4]{CZ15} and generalizes and refines  \cite[Theorem 4]{Hu11}.
\begin{corollary}
Let $A_1, \ldots , A_m$ be
non-negative matrices that define operators on $l^2(R)$ and $t \in [1,m]$. 
If we denote $S_i = A_i A^T _i$  and  \\
$T_i = S_i^{(t)} S_{i+1}^{(t)} \cdots S_m^{(t)} S_1^{(t)} S_2^{(t)} \cdots S_{i-1}^{(t)}$ for $i=1, \ldots , m$, then 
$$
 \|A_1 \circ A_2 \circ \cdots \circ A_m \|^2 \le  \rho (S_1 \circ S_2 \circ \cdots \circ S_m)  \le  
\rho \left( T_1^{(\frac{1}{t})} \circ \cdots \circ T_m^{(\frac{1}{t})} \right)^{ \frac{1}{m}}
$$
\be
\le \rho (S_1^{(t)} \cdots S_m^{(t)})^{\frac{1}{t}} \le \rho ((S_1 \cdots S_m )^{(t)})^{\frac{1}{t}} 
\le \rho (S_1\cdots S_m) . 
\label{th4nr}
\ee
\label{th4Hu}
\end{corollary}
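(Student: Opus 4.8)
The plan is to split (\ref{th4nr}) into its first inequality $\|A_1 \circ \cdots \circ A_m\|^2 \le \rho(S_1 \circ \cdots \circ S_m)$, which is the genuinely new ingredient, and the remaining chain, which I will obtain simply by applying Theorem \ref{genP1} to the operators $S_i$ in place of the $A_i$.

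First I would record the needed well-definedness facts. By Theorem \ref{DP} with $k=1$ and all exponents equal to $1$, the Hadamard product $A_1 \circ \cdots \circ A_m$ defines a bounded positive operator on $l^2(R)$; each $S_i = A_i A_i^T$ defines a positive operator on $l^2(R)$, being the product of $A_i$ with its Hilbert-space adjoint $A_i^T$ (cf.\ (\ref{eqT})); and, again by Theorem \ref{DP} with $k=1$ and exponents $1$ applied to $S_1, \ldots, S_m$, the product $S_1 \circ \cdots \circ S_m$ defines an operator on $l^2(R)$. This ensures that all norms and spectral radii below are finite and that the cited inequalities apply.

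Next I would prove the first inequality. Since $(A_1 \circ \cdots \circ A_m)^T = A_1^T \circ \cdots \circ A_m^T$, identity (\ref{eqT}) gives $\|A_1 \circ \cdots \circ A_m\|^2 = \rho\bigl((A_1 \circ \cdots \circ A_m)(A_1^T \circ \cdots \circ A_m^T)\bigr)$. Applying inequality (\ref{basic2}) with $k = 2$, exponents $\alpha_1 = \cdots = \alpha_m = 1$ (so that $\sum_j \alpha_j = m \ge 1$), $A_{1j} = A_j$ and $A_{2j} = A_j^T$, yields the entrywise domination
\[ (A_1 \circ \cdots \circ A_m)(A_1^T \circ \cdots \circ A_m^T) \le (A_1 A_1^T) \circ \cdots \circ (A_m A_m^T) = S_1 \circ \cdots \circ S_m . \]
Because the norm on a Banach sequence space is monotone, $0 \le C \le D$ forces $C^n \le D^n$ and hence $\|C^n\| \le \|D^n\|$ for all $n$, so $\rho(C) = \lim_n \|C^n\|^{1/n} \le \lim_n \|D^n\|^{1/n} = \rho(D)$; applied to the displayed inequality this gives $\|A_1 \circ \cdots \circ A_m\|^2 \le \rho(S_1 \circ \cdots \circ S_m)$.

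Finally, the remaining inequalities in (\ref{th4nr}) are exactly the spectral-radius chain (\ref{poslpP1}) of Theorem \ref{genP1} with $A_i$ replaced by $S_i$ and the same parameter $t \in [1,m]$: under this substitution the operator $P_i$ of that theorem becomes $S_i^{(t)} S_{i+1}^{(t)} \cdots S_m^{(t)} S_1^{(t)} \cdots S_{i-1}^{(t)} = T_i$, and (\ref{poslpP1}) reads $\rho(S_1 \circ \cdots \circ S_m) \le \rho\bigl(T_1^{(1/t)} \circ \cdots \circ T_m^{(1/t)}\bigr)^{1/m} \le \rho(S_1^{(t)} \cdots S_m^{(t)})^{1/t} \le \rho((S_1 \cdots S_m)^{(t)})^{1/t} \le \rho(S_1 \cdots S_m)$. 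Concatenating the two parts proves (\ref{th4nr}). I do not expect a serious obstacle here: the argument is essentially a reduction to Theorem \ref{genP1} together with the Hilbert-space identity $\|A\|^2 = \rho(AA^T)$, and the only point requiring attention is the routine bookkeeping that all the Hadamard and matrix products occurring genuinely define operators on $l^2(R)$, together with the elementary monotonicity of $\rho$ under entrywise domination of positive operators.
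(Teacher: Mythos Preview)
Your proposal is correct and follows essentially the same route as the paper: you establish the first inequality via (\ref{eqT}) together with the entrywise domination $(A_1 \circ \cdots \circ A_m)(A_1^T \circ \cdots \circ A_m^T) \le S_1 \circ \cdots \circ S_m$ from (\ref{basic2}), and then obtain the remaining chain by invoking (\ref{poslpP1}) with the $S_i$ in place of the $A_i$. The only cosmetic difference is that you spell out the monotonicity $\rho(C)\le\rho(D)$ for $0\le C\le D$ via Gelfand's formula, whereas the paper simply attributes this step to Theorem~\ref{DP}.
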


\begin{proof} 
By Theorem \ref{DP} we have 
$$(A_1 \circ A_2 \circ \cdots \circ A_m)(A_1 \circ A_2 \circ \cdots \circ A_m)^T =(A_1 \circ A_2 \circ \cdots \circ A_m)(A_1 ^T\circ A_2 ^T \circ \cdots \circ A_m ^T)  $$
$$\le (A_1 A_1 ^T)\circ( A_2 A_2 ^T)\circ \cdots \circ (A_m A_m ^T)= S_1 \circ S_2 \circ \cdots \circ S_m$$
and so it follows by (\ref{eqT}) and Theorem \ref{DP}
$$ \|A_1 \circ A_2 \circ \cdots \circ A_m \|^2 =\rho ( (A_1 \circ A_2 \circ \cdots \circ A_m)(A_1 \circ A_2 \circ \cdots \circ A_m)^T) \le \rho (S_1 \circ S_2 \circ \cdots \circ S_m),  $$
which proves the first inequality (\ref{th4nr}). Now the result follows by applying (\ref{poslpP1}).
\end{proof}

The following Cauchy-Schwarz type inequality for the spectral radius of $n \times n$ non-negative matrices  was proved in \cite[Proposition 2.6]{CZ15} using the trace description: if $A$, $B$ are $n \times n$  non-negative matrices, then
\be
\rho (A\circ B ) \le \rho (A \circ A)^{1/2}\rho (B \circ B)^{1/2}.
\label{CSsp}
\ee
This result has already been implicitly known and also applied (see e.g. the proof of \cite[Theorem 3.7]{P12}). Moreover, an easy application of Corollary \ref{dp} gives the following infinite-dimensional generalization of (\ref{CSsp}) and its analogues for the operator norm and the numerical radius.

\begin{theorem} Given $L \in \mathcal{L}$, let $A_1, \ldots , A_m$ be
non-negative matrices that define operators on $L$. 
Define functions $r, N : [1, \infty)  \mapsto \RR$ by
$$ r(t) =  \left( \rho ( A_1 ^{(t)}) \rho ( A_2 ^{(t)}) \cdots  \rho ( A_m ^{(t)}) \right)^{1/t}  \ \ \ \textrm{and}  \ \ 
   N(t) = \left( \| A_1 ^{(t)}\| \|A_2 ^{(t)}\| \cdots  \| A_m ^{(t)}\| \right)^{1/t}  . $$
Then the function $r$ is  decreasing on $ [1, \infty) $, and $\rho (A_1 \circ A_2 \circ \cdots \circ A_m)$ is its lower bound on the interval $[1, m]$.
Similarly,  the function $N$ is  decreasing on $ [1, \infty)$, and $\|A_1\circ A_2 \circ \cdots \circ A_m  \|$ is its lower bound on the interval $[1,m]$.

If, in addition,  $L = l^2 (R)$ then
\be
w (A_1\circ A_2 \circ \cdots \circ A_m  )  \le \left(w ( A_1 ^{(m)})  w ( A_2 ^{(m)}) \cdots  w ( A_m ^{(m)})\right)^{1/m}.
\label{genCSnum}
\ee
\label{decreasing}
\end{theorem}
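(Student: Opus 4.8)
The plan is to deduce all three statements from Corollary \ref{dp}, which provides the key multiplicativity estimates (\ref{gl1nrm}), (\ref{gl1vecr}), and (\ref{gl1num}) for Hadamard products of Schur powers. The essential observation is that for $1 \le s \le t$ one has $t/s \ge 1$, so $A_i^{(t)} = (A_i^{(s)})^{(t/s)}$, and these Schur powers again define operators on $L$ by Proposition \ref{2}.

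For monotonicity of $r$, I would fix $1 \le s \le t$ and write $A_i^{(t)} = (A_i^{(s)})^{(t/s)}$. Applying (\ref{gl1vecr}) with the $m$ matrices $A_1^{(s)}, \ldots, A_m^{(s)}$ and the equal exponents $\alpha_1 = \cdots = \alpha_m = t/(ms)$ — whose sum is $t/s \ge 1$, as required — gives
$$ \rho\!\left(A_1^{(t)} \circ \cdots \circ A_m^{(t)}\right)
= \rho\!\left((A_1^{(s)})^{(t/s)} \circ \cdots \circ (A_m^{(s)})^{(t/s)}\right)
\le \left(\rho(A_1^{(s)}) \cdots \rho(A_m^{(s)})\right)^{t/s} .$$
Wait — I should instead distribute the exponent so the conclusion is about $\rho(A_i^{(t)})$, not $\rho(A_i^{(s)})^{t/s}$; the cleanest route is to apply (\ref{gl1vecr}) directly to $A_1^{(t)}, \ldots, A_m^{(t)}$ with exponents $\alpha_i = s/t \le 1$. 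That fails the hypothesis $\sum \alpha_i \ge 1$ unless $ms/t \ge 1$. So the correct move is the first one: apply (\ref{gl1vecr}) to the matrices $A_i^{(s)}$ with exponents $\alpha_i = t/(ms)$, obtaining $\rho(A_1^{(t)} \circ \cdots \circ A_m^{(t)}) \le \prod_i \rho(A_i^{(s)})^{t/s}$, i.e. $r(t) \le r(s)$ after taking $t$-th roots of... no. Let me restate: raising both sides to the power $1/t$ gives $\rho(A_1^{(t)} \circ \cdots \circ A_m^{(t)})^{1/t} \le \prod_i \rho(A_i^{(s)})^{1/s} = r(s)$. Since $\rho(A_1^{(t)} \circ \cdots \circ A_m^{(t)})^{1/t} = r(t)$ by a further application of (\ref{gl1vecr}) (with exponents $1/t$, legitimate since $m/t \cdots$) — actually $r(t)$ is \emph{defined} as $\bigl(\rho(A_1^{(t)})\cdots\rho(A_m^{(t)})\bigr)^{1/t}$, so I want the reverse comparison. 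I would therefore use (\ref{gl1vecr}) with $\alpha_i = 1/t$ applied to $A_i^{(t)}$ only when $m/t \ge 1$, and otherwise argue via the chain $A_i^{(t)}=(A_i^{(s)})^{(t/s)}$. The bound $\rho(A_1^{(t)} \circ \cdots \circ A_m^{(t)}) \le \prod \rho(A_i^{(t)})^{1/m \cdot m}$... The honest plan: apply (\ref{gl1vecr}) to $A_i^{(s)}$ with exponents $t/(ms)$, yielding $\rho\bigl(A_1^{(t)}\circ\cdots\circ A_m^{(t)}\bigr)\le r(s)^t$; and apply (\ref{gl1vecr}) to $A_i^{(t)}$ with exponents $1/t$ (valid as $m/t\ge 1$ when $t\le m$) to get $r(t)^t = \prod\rho(A_i^{(t)}) \le$ ... this only works on $[1,m]$. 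For $t > m$ I would use submultiplicativity of $t \mapsto \rho(A^{(t)})^{1/t}$ for a single matrix, which is itself a consequence of (\ref{gl3}).

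For the lower bound on $[1,m]$: by Corollary \ref{dp} applied with exponents $1/t$ (legitimate since $m \cdot \tfrac{1}{t} = m/t \ge 1$ for $t \le m$),
$$\rho(A_1 \circ \cdots \circ A_m) = \rho\!\left((A_1^{(t)})^{(1/t)} \circ \cdots \circ (A_m^{(t)})^{(1/t)}\right) \le \left(\rho(A_1^{(t)}) \cdots \rho(A_m^{(t)})\right)^{1/t} = r(t),$$
and identically with $\|\cdot\|$ in place of $\rho$ using (\ref{gl1nrm}), giving the lower bound for $N$. The numerical-radius inequality (\ref{genCSnum}) is the single instance $t = m$ of the same computation, now invoking (\ref{gl1num}) on $L = l^2(R)$: $w(A_1 \circ \cdots \circ A_m) = w\bigl((A_1^{(m)})^{(1/m)} \circ \cdots \circ (A_m^{(m)})^{(1/m)}\bigr) \le \prod_i w(A_i^{(m)})^{1/m}$, since the exponents $1/m$ sum to $1$.

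The main obstacle is purely bookkeeping with the exponent constraint $\sum \alpha_i \ge 1$: monotonicity of $r$ and $N$ on all of $[1,\infty)$ cannot come from a single application of Corollary \ref{dp} (whose hypothesis would fail for large $t$), so I expect to split into the comparison $r(t) \le r(s)$ for $s \le t$ by writing $A_i^{(t)}=(A_i^{(s)})^{(t/s)}$ with $t/s \ge 1$ and applying (\ref{gl1vecr}) to the $m$-tuple $A_1^{(s)},\dots,A_m^{(s)}$ with all exponents equal to $t/(ms)$, whose sum $t/s$ is $\ge 1$ precisely because $t \ge s$. This gives $r(t) \le r(s)$ after the appropriate root extraction, with no restriction to $[1,m]$, and the analogous argument with (\ref{gl1nrm}) handles $N$. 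The restriction to $[1,m]$ enters only for the lower-bound claims, where $m/t \ge 1$ is exactly what makes Corollary \ref{dp} applicable to the $1/t$-th Schur powers.
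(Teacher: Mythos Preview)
Your arguments for the lower bound on $[1,m]$ and for the numerical-radius inequality are correct and coincide with the paper's: apply Corollary~\ref{dp} (resp.\ (\ref{gl1num})) to the matrices $A_i^{(t)}$ with exponents $1/t$, legitimate because $m/t \ge 1$ (resp.\ $=1$ when $t=m$).

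The genuine gap is in the monotonicity of $r$ (and $N$). You repeatedly try to apply (\ref{gl1vecr}) to the full $m$-tuple $A_1^{(s)},\dots,A_m^{(s)}$ with various equal exponents, but every such application yields an upper bound on the spectral radius of a \emph{Hadamard product} --- a quantity of the form $\rho\bigl(A_1^{(\beta)}\circ\cdots\circ A_m^{(\beta)}\bigr)$ --- whereas $r(t)=\bigl(\prod_i \rho(A_i^{(t)})\bigr)^{1/t}$ is a \emph{product of spectral radii}. These are different objects, and no manipulation of Corollary~\ref{dp} on the $m$-tuple will produce the inequality $\prod_i \rho(A_i^{(t)})^{1/t}\le \prod_i \rho(A_i^{(s)})^{1/s}$. (For instance, your ``final'' plan with exponents $t/(ms)$ gives a bound on $\rho\bigl(A_1^{(t/m)}\circ\cdots\circ A_m^{(t/m)}\bigr)$, not on $r(t)$.) The fix is the one you mention only in passing as a fallback for $t>m$: treat each factor separately. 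For $1\le s\le t$, write $A_i^{(t)}=(A_i^{(s)})^{(t/s)}$ and apply (\ref{gl3}) with $m=1$ to the single matrix $A_i^{(s)}$ to get $\rho(A_i^{(t)})=\rho\bigl((A_i^{(s)})^{(t/s)}\bigr)\le \rho(A_i^{(s)})^{t/s}$, i.e.\ $\rho(A_i^{(t)})^{1/t}\le\rho(A_i^{(s)})^{1/s}$. Multiplying over $i$ gives $r(t)\le r(s)$ on all of $[1,\infty)$, with no need to split cases; the same argument with (\ref{gl1}) handles $N$. This is exactly what the paper does.
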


\begin{proof} 
The  expression $ \rho ( A_i ^{(t)})^{1/t}$ is decreasing in $t \in [1,\infty)$. Indeed, if $s \ge t >0$ then 
the inequality (\ref{gl3}) implies that 
$$ \rho \left(A_i ^{(s)}\right)^{1/s}=\rho \left(\left(A_i ^{(t)}\right)^{\left(\frac{s}{t}\right)}\right)^{1/s} \le 
\rho \left(A_i ^{(t)}\right)^{1/t} . $$
So, it follows that the function $r$ is  decreasing.

If $1\le t \le m$, then  $\frac{m}{t} \ge 1$, and so we have by  (\ref{gl1vecr})
$$ r(t) \ge \rho ((A_1  ^{(t)} )^{(1/t)} \circ (A_2^{(t)} )^{(1/t)} \circ \cdots \circ (A_m ^{(t)} )^{(1/t)} ) = 
\rho (A_1\circ A_2 \circ \cdots \circ A_m)  . $$
Therefore, on the interval $[1,m]$ the function $r$ is bounded below by $\rho (A_1 \circ A_2 \circ \cdots \circ A_m)$.

In a similar manner one can show the properties of the function $N$.
Furthermore, the inequality (\ref{genCSnum}) follows from the inequality (\ref{gl1num}).
\end{proof}
\begin{remark}{\rm In the case when  $L=\CC ^n$ and $A_1, \ldots , A_m$ are $n \times n$ non-negative matrices, then the functions $t \mapsto r(t)$ and $t \mapsto N(t)$ from  Theorem \ref{decreasing} are well-defined decreasing functions on $(0, \infty)$, with lower bounds on the interval $(0, m]$ equal to  $\rho (A_1 \circ A_2 \circ \cdots \circ A_m)$ and $\|A_1 \circ A_2 \circ \cdots \circ A_m\|$, respectively.

Indeed, this follows from the proof of Theorem \ref{decreasing} by replacing the intervals $[1, \infty)$ and $[1,m]$ with $(0, \infty)$ and $(0,m]$, respectively. 
}
\end{remark}

\begin{remark}
{\rm In general, we do not have that $\rho (A_1\circ A_2 \circ \cdots A_m  ) \le r(t)$ for $t > m$.
For example, in the case $m=1$ take $A_1=\left[\begin{array}{cc} 1 & 1 \\1 & 1 \\ 
\end{array}\right]$. Then  $\rho(A_1)=2 > \rho (A_1 ^{(t)})^{1/t}=2^{1/t} $ for $t > 1$.
This matrix can be also used  in the general case  $m \ge 2$. 
Setting $A_k :=A_1$  for $k =2, \ldots, m$ we have
 $\rho (A_1 \circ A_2 \circ \cdots \circ A_m) = \rho(A_1)= 2 >  
\left( \rho ( A_1 ^{(t)}) \rho ( A_2 ^{t)}) \cdots  \rho ( A_m ^{(t)}) \right)^{1/t} = 2^{m/t}$ 
for $t > m$.

Note that the limit $\mu (A):= \lim _{k\to \infty}  \rho ( A ^{(t)})^{1/t}$ plays (at least in the case of $n \times n$ non-negative matrices) the role of the spectral radius in the algebraic system max algebra (see e.g. \cite{B98},  \cite{P08}, \cite{EJS88}, \cite{EHP90}, \cite{F86}, \cite{ED08}, \cite{Bu10}, \cite{GA15} and the references cited there for various applications). 
\label{decr}
}
\end{remark}
\begin{remark}{\rm We can use an example from \cite{DP05} to show that the product
$$ \left( w (A_1^{(t)}) w (A_2^{(t)}) \cdots  w (A_m^{(t)}) \right)^{1/t} $$
is not necessarily decreasing in $t$. Let $L=\CC ^2$ and
$$A =\left[
\begin{matrix}
0 & 1 \\
0 & 0 \\
\end{matrix}\right].$$
Then $A^{(t)}=A$ for all $t > 0$, $w(A)=\frac{1}{2}$, and so $w(A^{(t)})=\frac{1}{2} > \left(\frac{1}{2}\right)^t=w(A)^t$ for $t >1$.  Therefore, choose $A_1 = \ldots = A_m = A$ above.
\label{num_counter}
}
\end{remark}

The following result generalizes \cite[Theorem 5]{Hu11}.
\begin{theorem} 
Let $A_1, \ldots , A_m$ be
non-negative matrices that define operators on $l^2(R)$.
If $m$ is even, then
$$ \|A_1 \circ A_2 \circ \cdots \circ A_m \|^2 \le 
  \rho (A_1 ^TA_2A_3 ^TA_4 \cdots A_{m-1} ^T A_m   ) \rho (A_1A_2 ^TA_3A_4 ^T \cdots A_{m-1} A_m ^T  )$$
\be
= \rho (A_1 ^TA_2A_3 ^TA_4 \cdots A_{m-1} ^T A_m   )\rho (A_mA_{m-1} ^T \cdots A_{4} A_3 ^T A_2A_1 ^T ).
\label{th5sod_notref}
\ee
If $m$ is odd, then
\be
 \|A_1 \circ A_2 \circ \cdots \circ A_m \|^2 \le  \rho (A_1 A_2 ^TA_3A_4 ^T \cdots A_{m-2} A_{m-1} ^T A_m A_1 ^T A_2 A_3 ^TA_4 \cdots A_{m-2}^T A_{m-1} A_m ^T  )
\label{th5lih_notref}
\ee
\label{th5H}
\end{theorem}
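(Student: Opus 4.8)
The plan is to derive both inequalities from the Huang-type inequality (\ref{genHu}) --- together with, in the even case, the inequality (\ref{EJS}), i.e.\ (\ref{gl1vecr}) with unit exponents --- after rewriting $\|A_1\circ\cdots\circ A_m\|^2$ as a spectral radius. Put $H=A_1\circ A_2\circ\cdots\circ A_m$; by Theorem \ref{DP} (with $k=1$ and all exponents $1$) this is a non-negative matrix defining a positive operator on $l^2(R)$, so that by (\ref{eqT})
$$\|H\|^2=\rho(HH^T)=\rho\big((A_1\circ\cdots\circ A_m)(A_1^T\circ\cdots\circ A_m^T)\big).$$
Since the Hadamard product is commutative, I may reorder the $m$ factors inside each of the two brackets and then apply the inequality (\ref{basic2}) with $k=2$ and all $\alpha_j=1$; this bounds $HH^T$ above by a Hadamard product of $m$ matrices, each of the form $A_iA_j^T$. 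Everything then reduces to choosing the two reorderings so that (\ref{genHu}), applied to this Hadamard product, yields exactly the right-hand side in the statement.

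Suppose $m$ is odd. Then $\gcd(2,m)=1$, so the alternating index word $1,2,3,\ldots,m,1,2,\ldots,m$ of length $2m$ splits into consecutive pairs $(i_1,j_1),\ldots,(i_m,j_m)$ with $(i_1,\ldots,i_m)$ and $(j_1,\ldots,j_m)$ each a permutation of $(1,\ldots,m)$ (for $m=3$, say, the pairs are $(1,2),(3,1),(2,3)$). Reordering $A_1\circ\cdots\circ A_m$ as $A_{i_1}\circ\cdots\circ A_{i_m}$ and $A_1^T\circ\cdots\circ A_m^T$ as $A_{j_1}^T\circ\cdots\circ A_{j_m}^T$, the inequality (\ref{basic2}) gives $HH^T\le(A_{i_1}A_{j_1}^T)\circ\cdots\circ(A_{i_m}A_{j_m}^T)$, and then (\ref{genHu}) gives
$$\|H\|^2=\rho(HH^T)\le\rho\big((A_{i_1}A_{j_1}^T)(A_{i_2}A_{j_2}^T)\cdots(A_{i_m}A_{j_m}^T)\big),$$
where, by the choice of the pairs, the product on the right is precisely the $2m$-fold product occurring in (\ref{th5lih_notref}). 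This settles the odd case.

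Suppose $m$ is even. Now the single-product trick is unavailable, and I use instead a cyclic reordering: keeping $A_1\circ\cdots\circ A_m$ and reordering $A_1^T\circ\cdots\circ A_m^T$ as $A_2^T\circ A_3^T\circ\cdots\circ A_m^T\circ A_1^T$, the inequality (\ref{basic2}) gives
$$HH^T\le(A_1A_2^T)\circ(A_2A_3^T)\circ\cdots\circ(A_{m-1}A_m^T)\circ(A_mA_1^T).$$
Grouping the odd-numbered and the even-numbered Hadamard factors separately --- legitimate exactly because $m$ is even --- writes the right-hand side as $C\circ E$, where $C=(A_1A_2^T)\circ(A_3A_4^T)\circ\cdots\circ(A_{m-1}A_m^T)$ and $E=(A_2A_3^T)\circ(A_4A_5^T)\circ\cdots\circ(A_mA_1^T)$ are Hadamard products of $m/2$ matrices each. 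Applying first $\rho(C\circ E)\le\rho(C)\,\rho(E)$ --- the case of (\ref{gl1vecr}) with two factors and unit exponents --- then (\ref{genHu}) to $C$ and to $E$ separately, and a cyclic permutation inside one spectral radius, I obtain
$$\|H\|^2\le\rho(C)\,\rho(E)\le\rho\big(A_1A_2^TA_3A_4^T\cdots A_{m-1}A_m^T\big)\,\rho\big(A_1^TA_2A_3^TA_4\cdots A_{m-1}^TA_m\big),$$
which is (\ref{th5sod_notref}); the second form of the bound there is the first one with $\rho(X)=\rho(X^T)$ applied to $X=A_1A_2^TA_3A_4^T\cdots A_{m-1}A_m^T$.

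The only substantial point is the combinatorial bookkeeping behind the reorderings, and the parity of $m$ is exactly what governs it: for odd $m$ the coprimality $\gcd(2,m)=1$ lets the alternating index word break into $m$ admissible pairs, hence into a single product, whereas for even $m$ it only breaks into two interleaved halves, forcing the even bound to be a product of two spectral radii. Tracking which cyclic rotation (resp.\ transpose) of the resulting product one actually lands on is the other thing to watch, but it affects only the cosmetic form of the bound, not its value.
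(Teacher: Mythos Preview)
Your argument is correct, and it takes a genuinely different route from the paper's.

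The paper does not invoke the Huang-type inequality (\ref{genHu}) at all. Instead it raises $H^TH$ to the $\tfrac{m}{2}$-th power (even case) or the $m$-th power (odd case), inserts a different cyclic shift of the Hadamard factors at each of the resulting $m$ (respectively $2m$) positions, and then applies Theorem~\ref{DP} directly to this long product: first (\ref{basic2}) to bound it by a single Hadamard product of $m$-fold (respectively $2m$-fold) matrix products, then (\ref{spectral2}) to split the spectral radius. Your route is more modular --- you do the combinatorics once at the level of the two-factor product $HH^T$, and then hand off the work to the already-established (\ref{genHu}) and, in the even case, (\ref{gl1vecr}). The paper's route is more self-contained (it uses only Theorem~\ref{DP}) and in passing produces the intermediate Hadamard-type inequalities (\ref{th5sod}) and (\ref{th5lih}), which are then reused to derive the refinements (\ref{atb}) and (\ref{abtc}); your intermediate bounds are slightly different in form (you work with $HH^T$ rather than $H^TH$, and in the even case with $(A_iA_{i+1}^T)$-blocks rather than $(A_i^TA_{i+1})$-blocks), but equivalent up to transposition. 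Both proofs ultimately rest on the same parity phenomenon; yours makes it especially transparent why odd $m$ yields a single spectral radius while even $m$ forces a product of two.
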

\begin{proof} If $m$ is even, we have
by (\ref{basic2})
$$((A_1 \circ A_2 \circ \cdots \circ A_m)^T(A_1 \circ A_2 \circ \cdots \circ A_m))^\frac{m}{2}$$
$$=(A_1 ^T \circ A_2 ^T \circ \cdots \circ A_m ^T)(A_2 \circ  \cdots \circ A_m \circ A_1)(A_3 ^T \circ A_4 ^T \circ \cdots \circ A_m ^T\circ A_1 ^T \circ A_2^T )$$
$$(A_4 \circ \cdots \circ A_m\circ A_1 \circ A_2 \circ A_3) \cdots (A_{m-1 } ^T\circ A_m ^T \circ A_{1} ^T \circ \cdots \circ A_{m-2}^T)(A_m \circ A_1 \circ \cdots \circ A_{m-1})$$
$$\le (A_1 ^T A_2 A_3 ^T A_4\cdots A_{m-1} ^T A_m)\circ (A_2 ^T A_3 A_4 ^T A_5\cdots A_{m} ^T A_1)\circ \cdots $$
$$\circ (A_{m-1} ^T A_m A_{1} ^T A_2 \cdots A_{m-3} ^T A_{m-2} )\circ(A_{m} ^T A_1A_{2} ^T A_3 \cdots A_{m-2} ^T A_{m-1} ) $$
It follows by (\ref{spectral2}) that
$$\|A_1 \circ A_2 \circ \cdots \circ A_m \|^{m}= \rho ((A_1 \circ A_2 \circ \cdots \circ A_m)^T(A_1 \circ A_2 \circ \cdots \circ A_m))^\frac{m}{2}$$
\begin{equation}
\le \rho  ((A_1 ^T A_2 A_3 ^T A_4\cdots A_{m-1} ^T A_m)\circ (A_2 ^T A_3 A_4 ^T A_5\cdots A_{m} ^T A_1)\circ \cdots 
\label{th5sod}
\end{equation}
$$\circ (A_{m-1} ^T A_m A_{1} ^T A_2 \cdots A_{m-3} ^T A_{m-2} )\circ(A_{m} ^T A_1A_{2} ^T A_3 \cdots A_{m-2} ^T A_{m-1} )) $$
$$\le \rho (A_1 ^T A_2 A_3 ^T A_4\cdots A_{m-1} ^T A_m) \rho(A_2 ^T A_3 A_4 ^T A_5\cdots A_{m} ^T A_1)\cdots$$
$$\cdots \rho(A_{m-1} ^T A_m A_{1} ^T A_2 \cdots A_{m-3} ^T A_{m-2} )\rho(A_{m} ^T A_1A_{2} ^T A_3 \cdots A_{m-2} ^T A_{m-1} ) $$
$$=\rho ^{\frac{m}{2}} (A_1 ^TA_2A_3 ^TA_4 \cdots A_{m-1} ^T A_m   )\rho ^{\frac{m}{2}} (A_1A_2 ^TA_3A_4 ^T \cdots A_{m-1} A_m ^T  ),$$
which proves (\ref{th5sod_notref}).

If $m$ is odd, we have by (\ref{basic2})
$$((A_1 \circ A_2 \circ \cdots \circ A_m)^T(A_1 \circ A_2 \circ \cdots \circ A_m))^m$$
$$=(A_1 ^T \circ A_2 ^T \circ \cdots \circ A_m ^T)(A_2 \circ  \cdots \circ A_m \circ A_1)(A_3 ^T \circ A_4 ^T \circ \cdots \circ A_m ^T\circ A_1 ^T \circ A_2^T )$$
$$(A_4 \circ \cdots \circ A_m\circ A_1 \circ A_2 \circ A_3) \cdots (A_{m-1 }\circ A_m \circ A_{1} \circ \cdots \circ A_{m-2})(A_m ^T \circ A_1 ^T\circ \cdots \circ A_{m-1}^T)$$
$$(A_1  \circ A_2  \circ \cdots \circ A_m )(A_2 ^T \circ  \cdots \circ A_m^T \circ A_1^T)(A_3 \circ A_4  \circ \cdots \circ A_m \circ A_1  \circ A_2 )\cdots$$
$$ \cdots (A_{m-1 }^T\circ A_m^T \circ A_{1}^T \circ \cdots \circ A_{m-2}^T)(A_m \circ A_1 \circ \cdots \circ A_{m-1}) \le $$
$$(A_1 ^T A_2 A_3^TA_4  \cdots A_{m-1} A_m ^TA_1  A_2^T A_3 A_4^T \cdots A_{m-1}^T A_m) \circ (A_2 ^T A_3A_4^T  \cdots A_{m-1}^T A_m A_1^T  A_2 A_3^T A_4 $$
$$\cdots A_{m-1} A_m ^T A_1) \circ \cdots \circ (A_m ^TA_1  A_2^T A_3 A_4^T \cdots A_{m-1}^T A_mA_1 ^T A_2 A_3^TA_4  \cdots A_{m-1}). $$
It follows by (\ref{spectral2}) that
\begin{equation}
\|A_1 \circ A_2 \circ \cdots \circ A_m \|^{2m} 
\label{th5lih}
\end{equation}
$$ \le \rho ((A_1 ^T A_2 A_3^TA_4  \cdots A_{m-1} A_m ^TA_1  A_2^T A_3 A_4^T \cdots A_{m-1}^T A_m) \circ $$ 
$$ \circ (A_2 ^T A_3A_4^T  \cdots A_{m-1}^T A_m A_1^T  A_2 A_3^T A_4 \cdots A_{m-1} A_m ^T A_1) \circ \cdots $$
$$ \cdots \circ (A_m ^TA_1  A_2^T A_3 A_4^T \cdots A_{m-1}^T A_mA_1 ^T A_2 A_3^TA_4  \cdots A_{m-1})) $$
$$\le  \rho ^{\frac{m+1}{2} }(A_1 ^T A_2 A_3^TA_4  \cdots A_{m-1} A_m ^TA_1  A_2^T A_3 A_4^T \cdots A_{m-1}^T A_m)\times$$
$$\rho ^{\frac{m-1}{2} } (A_1 A_2 ^TA_3A_4 ^T \cdots A_{m-1} ^T A_mA_1 ^T A_2 A_3 ^TA_4 \cdots A_{m-1} A_m ^T  ) $$
$$= \rho ^m (A_1 A_2 ^TA_3A_4 ^T \cdots A_{m-1} ^T A_mA_1 ^T A_2 A_3 ^TA_4 \cdots A_{m-1} A_m ^T  ), $$
which completes the proof.
\end{proof}
The following result  follows from Theorem \ref{th5H} and its proof. It generalizes and refines \cite[Corollary 6]{Hu11} and \cite[Corollary 2.3]{CZ15}.
\begin{corollary}  
Let $A$,$B$ and $C$ be
non-negative matrices that define operators on $l^2(R)$. Then 
\be
\|A\circ B\| \le \rho  ^{\frac{1}{2}} ((A^T B )\circ (B ^T A)) \le \rho  (A ^TB) 
\label{atb}
\ee
and
\be
\|A\circ B \circ C\|  \le \rho^{\frac{1}{6}} ((A ^T B C^TA  B^T C) \circ (B ^T CA^T  BC^T A) \circ  (C ^TA B^T C A^TB)) 
\label{abtc}
\ee
$$ \le  \rho ^{\frac{1}{2}} (A B ^TCA ^T B C^T).$$
\end{corollary}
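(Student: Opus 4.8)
The plan is to read both displayed chains off the proof of Theorem \ref{th5H}: the two leftmost inequalities come from specializing that proof to the cases of two and three matrices and stopping one step before the final use of (\ref{spectral2}) (that is, keeping the Hadamard product of products rather than splitting it by (\ref{spectral2})); the rightmost inequalities then follow from (\ref{gl1vecr}) applied with all exponents equal to $1$, together with $\rho(ST)=\rho(TS)$ and the identity $\rho(M)=\rho(M^{T})$, which holds because $M^{T}$ is the adjoint on $l^{2}(R)$ of the non-negative matrix $M$.

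For (\ref{atb}) I would run the even case of the proof of Theorem \ref{th5H} with the two matrices $A,B$. Writing $M=A\circ B$ and using commutativity $A\circ B=B\circ A$, inequality (\ref{basic2}) gives $M^{T}M=(A^{T}\circ B^{T})(B\circ A)\le (A^{T}B)\circ (B^{T}A)$; hence by (\ref{eqT}), $\|A\circ B\|^{2}=\rho(M^{T}M)\le\rho((A^{T}B)\circ(B^{T}A))$, the first inequality. For the second, apply (\ref{gl1vecr}) to the two matrices $A^{T}B$ and $B^{T}A=(A^{T}B)^{T}$ with exponents $1,1$: this gives $\rho((A^{T}B)\circ(B^{T}A))\le\rho(A^{T}B)\,\rho(B^{T}A)$, and $\rho(B^{T}A)=\rho((A^{T}B)^{T})=\rho(A^{T}B)$, so the bound is $\rho(A^{T}B)^{2}$.

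For (\ref{abtc}) I would run the odd case of the proof of Theorem \ref{th5H} with the three matrices $A,B,C$. Expanding $(M^{T}M)^{3}$, where $M=A\circ B\circ C$, as a product of six Hadamard triples and relabelling the slots cyclically as in that proof, (\ref{basic2}) gives
$$(M^{T}M)^{3}\le X_{1}\circ X_{2}\circ X_{3},$$
with $X_{1}=A^{T}BC^{T}AB^{T}C$, $X_{2}=B^{T}CA^{T}BC^{T}A$, $X_{3}=C^{T}AB^{T}CA^{T}B$, each a cyclic rotation of the previous one. Taking spectral radii and using (\ref{eqT}) gives $\|A\circ B\circ C\|^{6}=\rho((M^{T}M)^{3})\le\rho(X_{1}\circ X_{2}\circ X_{3})$, the first inequality of (\ref{abtc}). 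For the second, apply (\ref{gl1vecr}) to $X_{1},X_{2},X_{3}$ with exponents $1,1,1$: $\rho(X_{1}\circ X_{2}\circ X_{3})\le\rho(X_{1})\rho(X_{2})\rho(X_{3})$, and since $X_{1},X_{2},X_{3}$ and $AB^{T}CA^{T}BC^{T}$ are all cyclic rotations of one another, $\rho(X_{i})=\rho(AB^{T}CA^{T}BC^{T})$ for each $i$ by $\rho(ST)=\rho(TS)$, so the bound becomes $\rho(AB^{T}CA^{T}BC^{T})^{3}$.

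Everything else is bookkeeping. The one step that needs care is the cyclic relabelling in the odd case: one must verify that, after inserting the right cyclic shifts into the six Hadamard triples composing $(M^{T}M)^{3}$, the application of (\ref{basic2}) yields exactly the three words $X_{1},X_{2},X_{3}$ displayed in (\ref{abtc}), and that these are rotations of $AB^{T}CA^{T}BC^{T}$. Since this is precisely the manipulation already carried out in the proof of Theorem \ref{th5H}, no new idea is required; it is only a matter of tracking indices, and that is the only place I expect any difficulty.
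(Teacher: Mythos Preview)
Your proposal is correct and follows essentially the same route as the paper. The paper's proof simply invokes the intermediate inequalities (\ref{th5sod}) and (\ref{th5lih}) from the proof of Theorem~\ref{th5H} and then applies (\ref{spectral2}) together with $\rho(M)=\rho(M^{T})$ and $\rho(ST)=\rho(TS)$ to collapse the products of spectral radii---exactly what you spell out for $m=2$ and $m=3$; your use of cyclic invariance to identify $\rho(X_{1})=\rho(X_{2})=\rho(X_{3})=\rho(AB^{T}CA^{T}BC^{T})$ is the same identification the paper makes in the last line of the odd case.
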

\begin{proof} It follows by (\ref{th5sod}) that
$$\|A\circ B\| \le \rho  ^{\frac{1}{2}} ((A^T B )\circ (B ^T A)) \le  \rho  ^{\frac{1}{2}} (A^T B )   \rho  ^{\frac{1}{2}}(B ^T A)= \rho  (A ^TB), $$
which proves (\ref{atb}).

Similarly (\ref{abtc}) follows from (\ref{th5lih}).
\end{proof}
The inequalities (\ref{abtc}) yield the following lower bounds for the operator norm of the Jordan triple product $ABA$.
\begin{corollary}
Let $A$ and $B$ be
non-negative matrices that define operators on $l^2(R)$. Then 
\be
\|A\circ B^T  \circ A\|  \le \rho^{\frac{1}{6}} ((A ^T B^T A^TA  B A) \circ (B  AA^T  B^TA^T A) \circ  (A ^TA B A A^TB^T)) \le 
\|ABA\|
\label{Jordan}
\ee
\end{corollary}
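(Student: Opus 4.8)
The plan is to derive (\ref{Jordan}) from the already-established inequalities (\ref{abtc}) by a careful substitution of matrices, together with the standard identities $\rho(ST)=\rho(TS)$, $\|S\|=\|S^T\|$, and (\ref{eqT}). The key observation is that the left-hand side $\|A\circ B^T\circ A\|$ and the three-factor quantity in (\ref{abtc}) will match once we feed in the right arguments, while the right-hand side $\|ABA\|$ must be recognized as a spectral radius via (\ref{eqT}).

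First I would apply the first inequality of (\ref{abtc}) with the substitution $A\mapsto A$, $B\mapsto B^T$, $C\mapsto A$ (note all three matrices $A$, $B^T$, $A$ define positive operators on $l^2(R)$ since $B^T$ does). This immediately gives
$$\|A\circ B^T\circ A\|\le \rho^{\frac16}\!\left((A^TB^TA^TA\,B A)\circ(B\,A A^T B^T A^T A)\circ(A^TA\,B A A^T B^T)\right),$$
which is exactly the middle term of (\ref{Jordan}) (after using $(B^T)^T=B$). So the first inequality in (\ref{Jordan}) is just a special case of (\ref{abtc}) and requires essentially no work.

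The second inequality is where the real content lies. From (\ref{abtc}) we also get the upper bound $\rho^{\frac12}\big(A\,(B^T)^T\,A\,A^T\,B^T\,A^T\big)=\rho^{\frac12}(A\,B\,A\,A^T\,B^T\,A^T)$ for the same middle term. Now I would observe that $ABA\cdot A^TB^TA^T=(ABA)(ABA)^T$, so by (\ref{eqT}) we have $\rho\big((ABA)(ABA)^T\big)=\|ABA\|^2$, hence $\rho^{\frac12}(ABAA^TB^TA^T)=\|ABA\|$. Combining, the middle term of (\ref{Jordan}) is bounded above by $\|ABA\|$, completing the chain. The main obstacle — really the only place to be careful — is getting the cyclic ordering of the factors in the degree-6 products exactly right when substituting into (\ref{abtc}), and confirming that the substituted right-hand side of (\ref{abtc}) literally equals $(ABA)(ABA)^T$ up to a cyclic permutation (so that $\rho$ is unchanged); this is a routine but error-prone bookkeeping check with the transpose-reversal rule $(XY)^T=Y^TX^T$.
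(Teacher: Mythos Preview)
Your proposal is correct and follows exactly the paper's approach: substitute $B\mapsto B^T$, $C\mapsto A$ into (\ref{abtc}) to get the middle term and the bound $\rho^{1/2}(ABAA^TB^TA^T)$, then identify this last quantity with $\|ABA\|$ via (\ref{eqT}) since $ABAA^TB^TA^T=(ABA)(ABA)^T$. No cyclic permutation is actually needed in the last step, as the product is literally $(ABA)(ABA)^T$.
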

\begin{proof} It follows by (\ref{abtc}) that
$$\|A\circ B^T \circ A\|\le \rho^{\frac{1}{6}} ((A ^T B^T A^TA  B A) \circ (B  AA^T  B^TA^T A) \circ  (A ^TA B A A^TB^T))$$
$$  \le  \rho ^{\frac{1}{2}} (A B AA ^T B^T A^T) =  \|ABA\|, $$
which completes the proof.
\end{proof}
In contrast to (\ref{Jordan}) the inequality $\|A\circ B \circ A\|\le  \|ABA\|$ is not valid in general as the following example from  \cite{Hu11} shows.
\begin{example}{\rm If  $A =\left[
\begin{matrix}
0 & 1 \\
0 & 1 \\
\end{matrix}\right]$ and $B =\left[
\begin{matrix}
1 & 1 \\
0 & 0 \\
\end{matrix}\right]$, then $\|A\circ B \circ A\|=1 >0=  \|ABA\|$.
}
\label{counter}
\end{example}

Note that the inequalities (\ref{atb})  refine  the well-known inequality  $\|A\circ B\|\le \|A\|\|B\|$ and that we have 
$$\rho(A\circ B) \le\|A\circ B\| \le  \rho  ^{\frac{1}{2}} ((A^T B )\circ (B ^T A)) \le \rho (A^TB) \le \|A^TB\|\le \|A\|\|B\|.$$
Note also that $\|A\circ B \| \le \rho (AB)$ is not valid in general  as the matrices from Example \ref{counter} show (as it has already been pointed out in \cite{Hu11}).

\vspace{3mm}

We conclude the paper by combining the spectral mapping theorem for analytic functions and the inequality (\ref{genHu}). 
To this end, let $\AAA _+$ denote the collection of all power series 
$$ f(z)=\sum_{j=0}^\infty\al_j z^j $$ 
having nonnegative coeficients $\al_j \ge 0$ ($j=0,1,\dots$). 
Let $R_f$ be the radius of convergence of $f\in\AAA _+$, that is, we have
$$ \frac{1}{R_f}= \limsup_{j\to\infty} \al_j^{1/j} . $$
If $A$ is an operator on a Banach space such that $\rho (A)<R_f$, then the operator $f(A)$ is defined by 
$$ f(A)=\sum_{j=0}^\infty\al_j A^j . $$
 
\begin{theorem}
 Given $L \in \mathcal{L}$,  let  $A_1, \ldots, A_m$ be
non-negative matrices that define operators on $L$. 
If $f\in\AAA _+$ 
and $\rho(A_1   \cdots  A_m) < R_f$, then 
$$ \rho (f(A _1  \circ \cdots \circ A _m))\le \rho (f(A_1  \cdots  A_m)).$$ 
\label{powerineq}
\end{theorem}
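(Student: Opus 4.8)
The plan is to derive the inequality from the ``scalar'' inequality (\ref{genHu}) of Theorem \ref{Bfs} together with the spectral mapping theorem for the holomorphic functional calculus. Put $B := A_1 \circ \cdots \circ A_m$. First I would observe that $B$ is a non-negative matrix that defines an operator on $L$ — this is the case $\al_1 = \cdots = \al_m = 1$ of Corollary \ref{dp}, since $\sum_i 1 = m \ge 1$ — and that, by (\ref{genHu}), $\rho(B) \le \rho(A_1 \cdots A_m) < R_f$. Consequently $\rho(B) < R_f$ and $\rho(A_1 \cdots A_m) < R_f$, so both $f(B)$ and $f(A_1 \cdots A_m)$ are well-defined bounded (indeed positive) operators on $L$.

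The core of the argument is the identity
$$ \rho(f(C)) = f(\rho(C)) , $$
valid for any non-negative matrix $C$ that defines an operator on $L$ and satisfies $\rho(C) < R_f$. To prove it, one applies the holomorphic spectral mapping theorem, which gives $\sigma(f(C)) = f(\sigma(C))$ and hence $\rho(f(C)) = \max \{ |f(\la)| : \la \in \sigma(C) \}$. For every $\la \in \sigma(C)$ we have $|\la| \le \rho(C)$, and since all coefficients $\al_j$ of $f$ are non-negative,
$$ |f(\la)| \le \sum_{j=0}^\infty \al_j |\la|^j \le \sum_{j=0}^\infty \al_j \, \rho(C)^j = f(\rho(C)) . $$
On the other hand $\rho(C)$ itself lies in $\sigma(C)$ (as recalled in Section 2 for operators of this kind), so $f(\rho(C)) \in f(\sigma(C)) = \sigma(f(C))$ and, since $f(\rho(C)) \ge 0$, this gives $f(\rho(C)) \le \rho(f(C))$; equality follows.

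Applying this identity to $C = B$ and to $C = A_1 \cdots A_m$ yields $\rho(f(B)) = f(\rho(B))$ and $\rho(f(A_1 \cdots A_m)) = f(\rho(A_1 \cdots A_m))$. Since $\al_j \ge 0$, the function $f$ is non-decreasing on $[0, R_f)$, so from $\rho(B) \le \rho(A_1 \cdots A_m)$ we obtain $f(\rho(B)) \le f(\rho(A_1 \cdots A_m))$, that is, $\rho(f(A_1 \circ \cdots \circ A_m)) \le \rho(f(A_1 \cdots A_m))$, as claimed. The argument is essentially formal; the only slightly delicate point is the identity $\rho(f(C)) = f(\rho(C))$, which rests on two standard facts about positive operators defined by non-negative matrices on spaces in $\mathcal{L}$ — the validity of the holomorphic spectral mapping theorem (unproblematic here since $\rho(C) < R_f$) and the inclusion of the spectral radius in the spectrum. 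Once these are in hand, the monotonicity of $f$ and Theorem \ref{Bfs} complete the proof.
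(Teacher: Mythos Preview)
Your proof is correct and follows essentially the same route as the paper: apply the spectral mapping theorem to obtain $\rho(f(C)) = f(\rho(C))$ for $C = A_1 \circ \cdots \circ A_m$ and $C = A_1 \cdots A_m$, then use (\ref{genHu}) together with the monotonicity of $f$ on $[0,R_f)$. The paper's version is simply terser, writing the chain $\rho(f(A_1 \circ \cdots \circ A_m)) = f(\rho(A_1 \circ \cdots \circ A_m)) \le f(\rho(A_1 \cdots A_m)) = \rho(f(A_1 \cdots A_m))$ in one line, while you spell out why $\rho(f(C)) = f(\rho(C))$ holds (using $\rho(C) \in \sigma(C)$ and the non-negativity of the coefficients) and why $f(B)$ is well-defined.
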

\begin{proof}
 If $\rho(A_1  \cdots  A_m) < R_f$, then  it follows from the spectral mapping theorem and  (\ref{genHu}) that
$$\rho (f(A _1  \circ \cdots \circ A _m))= f (\rho (A _1  \circ \cdots \circ A _m)) $$
$$\le f( \rho(A_1  \cdots A_m))= \rho (f (A_1  \cdots  A_m)),$$
which completes the proof.
\end{proof}

Choosing the exponential series and the C. Neumann series for $f\in\AAA _+$, we obtain the following corollaries.

\begin{corollary}
 Given $L \in \mathcal{L}$,  let  $A_1, \ldots, A_m$ be
non-negative matrices that define operators on $L$. 
Then 
$$ \rho (\exp (A _1  \circ \cdots \circ A _m))\le \rho (\exp (A_1  \cdots  A_m)).$$ 
\label{exp}
\end{corollary}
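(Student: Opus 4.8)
The statement to prove is Corollary \ref{exp}, which asks for $\rho(\exp(A_1 \circ \cdots \circ A_m)) \le \rho(\exp(A_1 \cdots A_m))$ for non-negative matrices $A_1, \ldots, A_m$ defining operators on $L \in \mathcal{L}$. The plan is to obtain this as an immediate special case of Theorem \ref{powerineq} by choosing $f$ to be the exponential series $f(z) = \exp(z) = \sum_{j=0}^\infty z^j / j!$. One first checks that $f \in \AAA_+$: its coefficients $\al_j = 1/j!$ are all non-negative. One then checks the hypothesis $\rho(A_1 \cdots A_m) < R_f$: since $\limsup_{j\to\infty} (1/j!)^{1/j} = 0$ by Stirling's formula (or the elementary fact that $j! \ge (j/e)^j$), the radius of convergence $R_f = \infty$, so the inequality $\rho(A_1 \cdots A_m) < R_f$ holds automatically for every choice of the $A_i$.

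With these verifications in hand, Theorem \ref{powerineq} applies directly and yields
$$ \rho(\exp(A_1 \circ \cdots \circ A_m)) = \rho(f(A_1 \circ \cdots \circ A_m)) \le \rho(f(A_1 \cdots A_m)) = \rho(\exp(A_1 \cdots A_m)), $$
which is exactly the assertion. So the body of the proof reduces to two sentences: noting $f(z) = \exp z \in \AAA_+$ and $R_f = \infty$, and then invoking Theorem \ref{powerineq}.

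There is essentially no obstacle here, since Theorem \ref{powerineq} has already done all the work — it packages the spectral mapping theorem (giving $\rho(f(T)) = f(\rho(T))$ for an operator $T$ with $\rho(T) < R_f$, valid because $f$ has non-negative coefficients so $f$ is increasing on $[0, R_f)$ and $\rho(T)$ lies in the spectrum of $T$) together with the key inequality $\rho(A_1 \circ \cdots \circ A_m) \le \rho(A_1 \cdots A_m)$ from Theorem \ref{Bfs} (inequality (\ref{genHu})) and the monotonicity of $f$. The only thing one must be slightly careful about is confirming the convergence condition, and for the exponential that is trivial since $R_f = \infty$. If one wanted to write the proof fully self-contained one would re-derive the chain $\rho(f(A_1 \circ \cdots \circ A_m)) = f(\rho(A_1 \circ \cdots \circ A_m)) \le f(\rho(A_1 \cdots A_m)) = \rho(f(A_1 \cdots A_m))$, but given that Theorem \ref{powerineq} is available earlier in the paper, the cleanest route is simply to cite it.
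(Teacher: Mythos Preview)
Your proposal is correct and matches the paper's own approach: the paper simply states that Corollary~\ref{exp} follows from Theorem~\ref{powerineq} by choosing $f$ to be the exponential series, and your argument spells out exactly this (verifying $f\in\AAA_+$ and $R_f=\infty$) in a bit more detail.
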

\begin{corollary}
 Given $L \in \mathcal{L}$,  let  $A_1, \ldots, A_m$ be
non-negative matrices that define operators on $L$. 
If  $\lambda > \rho (A_1  \cdots  A_m)$, then 
$$ \rho ( (\lambda I - A _1  \circ \cdots \circ A _m)^{-1})\le \rho ( (\lambda I - A _1  \cdots A _m)^{-1}).$$ 
\label{resolv}
\end{corollary}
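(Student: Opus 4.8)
The plan is to deduce Corollary~\ref{resolv} from Theorem~\ref{powerineq} by choosing $f$ to be a suitably rescaled C.~Neumann (geometric) series. Concretely, for a fixed $\lambda > \rho(A_1 \cdots A_m)$ I would set
$$ f(z) = \sum_{j=0}^\infty \frac{z^j}{\lambda^{j+1}} = \frac{1}{\lambda}\cdot\frac{1}{1 - z/\lambda}, $$
which belongs to $\AAA_+$ since all coefficients $\al_j = \lambda^{-(j+1)}$ are nonnegative, and whose radius of convergence is $R_f = \lambda$. The key algebraic identity is that whenever $\rho(B) < \lambda$ one has $f(B) = (\lambda I - B)^{-1}$, which follows from summing the Neumann series $(\lambda I - B)^{-1} = \lambda^{-1}\sum_{j\ge 0}(B/\lambda)^j$ and is legitimate precisely because the spectral radius condition guarantees norm-convergence of the series.

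The steps, in order, are as follows. First, verify $f \in \AAA_+$ and compute $R_f = \lambda$ from the formula $1/R_f = \limsup_j \al_j^{1/j} = \limsup_j \lambda^{-(j+1)/j} = 1/\lambda$. Second, observe that the hypothesis $\lambda > \rho(A_1 \cdots A_m)$ is exactly the condition $\rho(A_1 \cdots A_m) < R_f$ needed to invoke Theorem~\ref{powerineq}, so that
$$ \rho\bigl(f(A_1 \circ \cdots \circ A_m)\bigr) \le \rho\bigl(f(A_1 \cdots A_m)\bigr). $$
Third, identify both sides as resolvents: on the right, $\rho(A_1 \cdots A_m) < \lambda$ gives $f(A_1 \cdots A_m) = (\lambda I - A_1 \cdots A_m)^{-1}$; on the left, I need $\rho(A_1 \circ \cdots \circ A_m) < \lambda$ so that $f(A_1 \circ \cdots \circ A_m) = (\lambda I - A_1 \circ \cdots \circ A_m)^{-1}$. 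Substituting these two identities into the displayed inequality yields exactly the claimed bound, completing the proof.

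The one point that needs care — and the only genuine obstacle — is justifying that the Hadamard product also satisfies the spectral radius bound $\rho(A_1 \circ \cdots \circ A_m) < \lambda$, since the hypothesis is stated only for the ordinary product. This is immediate from inequality~(\ref{genHu}) of Theorem~\ref{Bfs}, valid because $L \in \mathcal{L}$: we have $\rho(A_1 \circ \cdots \circ A_m) \le \rho(A_1 \cdots A_m) < \lambda = R_f$. Thus the Hadamard product $A_1 \circ \cdots \circ A_m$ also has spectral radius strictly below $R_f$, so $f$ applied to it is again a convergent Neumann series representing the resolvent $(\lambda I - A_1 \circ \cdots \circ A_m)^{-1}$. (Everything else is the routine spectral-mapping bookkeeping already packaged inside Theorem~\ref{powerineq}.) I would present the argument in three or four lines, citing Theorem~\ref{powerineq} for the main inequality and (\ref{genHu}) for the resolvent being well-defined on the left-hand side.
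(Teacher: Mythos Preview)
Your proposal is correct and follows exactly the route indicated by the paper: the corollary is stated as an immediate consequence of Theorem~\ref{powerineq} upon choosing $f$ to be the C.~Neumann series $f(z)=\sum_{j\ge 0} z^j/\lambda^{j+1}=(\lambda - z)^{-1}$, with $R_f=\lambda$. Your extra remark that $\rho(A_1\circ\cdots\circ A_m)<\lambda$ is needed for the left-hand resolvent to make sense is correct, but as you yourself note this is already absorbed into the proof of Theorem~\ref{powerineq} via (\ref{genHu}), so no separate justification is required.
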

\noindent {\bf Acknowledgement.}  This work was supported in part by grant P1-0222 of the Slovenian Research Agency.

\bibliographystyle{amsplain}

\end{document}